\theoremstyle{plain}%
  \newtheorem{theorem}{Theorem}
  \newtheorem{corollary}[theorem]{Corollary}
  \newtheorem{proposition}[theorem]{Proposition}
  \newtheorem{lemma}[theorem]{Lemma}
  \newtheorem{example}[theorem]{Example}
  \newtheorem{definition}[theorem]{Definition}
  \newtheorem{remark}[theorem]{Remark}}
\newfont{\hueca}{msbm10}
\begin{document}

\title{$k$-Modules over linear spaces by $n$-linear maps admitting a multiplicative basis}

\thanks{The first and third authors acknowledge financial assistance by the Centre for Mathematics of the University of Coimbra -- UID/MAT/00324/2013, funded by the Portuguese Government through FCT/MEC and co-funded by the European Regional Development Fund through the Partnership Agreement PT2020. The second author is supported by RFBR 17-01-00258 and he also acknowledges the financial support by the Centre for Mathematics of the University of Coimbra to visit the University of Coimbra. Third author is also supported by the PCI of the UCA `Teor\'\i a de Lie y Teor\'\i a de Espacios de Banach' and, by the PAI with project number FQM298 and he acknowledges the Funda\c{c}\~{a}o para a Ci\^{e}ncia e a Tecnologia for the grant with reference SFRH/BPD/101675/2014.}

\author[E. Barreiro]{E. Barreiro}
\address{Elisabete Barreiro. CMUC, Universidade de Coimbra. Coimbra, Portugal.}
\email{mefb@mat.uc.pt}

\author[I. Kaygorodov]{I. Kaygorodov}
\address{Ivan Kaygorodov. CMCC, Universidade Federal do ABC. Santo Andr\'e, Brasil.}
\email{kaygorodov.ivan@gmail.com}

\author[J.M. S\'anchez]{J.M. S\'anchez}
\address{Jos\'e M. S\'anchez. CMUC, Universidade de Coimbra. Coimbra, Portugal.}
\email{txema.sanchez@mat.uc.pt}

\begin{abstract}
We study the structure of certain $k$-modules $\mathbb{V}$ over linear spaces $\mathbb{W}$ with restrictions neither on the dimensions of $\mathbb{V}$ and $\mathbb{W}$ nor on the base field $\mathbb F$. A basis $\mathfrak B = \{v_i\}_{i\in I}$ of $\mathbb{V}$ is called multiplicative with respect to the basis $\mathfrak B' = \{w_j\}_{j \in J}$ of $\mathbb{W}$ if for any $\sigma \in S_n,$ $i_1,\dots,i_k \in I$ and $j_{k+1},\dots, j_n \in J$ we have $[v_{i_1},\dots, v_{i_k}, w_{j_{k+1}}, \dots, w_{j_n}]_{\sigma} \in \mathbb{F}v_{r_{\sigma}}$ for some $r_{\sigma} \in I$. We show that if $\mathbb{V}$ admits a multiplicative basis then it decomposes as the direct sum $\mathbb{V} = \bigoplus_{\alpha} V_{\alpha}$ of well described $k$-submodules $V_{\alpha}$ each one admitting a multiplicative basis. Also the minimality of $\mathbb{V}$ is characterized in terms of the multiplicative basis and it is shown that the above direct sum is by means of the family of its minimal $k$-submodules, admitting each one a multiplicative basis. Finally we study an application of $k$-modules with a multiplicative basis over an arbitrary $n$-ary algebra with multiplicative basis.

{\it Keywords}: Multiplicative basis, infinite dimensional linear space, $k$-module over a linear space, representation theory, structure theory.

{\it MSC2010}: 17B10, 16D80.

\medskip
\end{abstract}

\maketitle

\section{Introduction and previous definitions}

In the literature it is usual to describe an algebra by exhibiting a multiplicative table among the elements of a fixed basis. There exist many classical examples of algebras admitting multiplicative bases in the setting of several algebras as associative algebras, Lie algebras, Malcev algebras, Leibniz algebras, hom-Lie algebras, etc. For instance, in the class of associative algebras we have that the classes of full matrix algebras, group-algebras and quiver algebras when $\mathbb{F}$ is algebraically closed are examples of (associative) algebras admitting multiplicative basis \cite{4,m2,6,17,23}. In the class of Lie algebras we can consider the semisimple finite-dimensional Lie algebras over algebraically closed fields of characteristic 0, semisimple separable $L^*$-algebras \cite{25}, semisimple locally finite split Lie algebras \cite{26}, Heisenberg algebras \cite{22}, twisted Heisenberg algebras \cite{1} or the split Lie algebras considered in \cite[Section 3]{10}. In the class of Leibniz algebras we have the classes of (complex) finite-dimensional naturally graded filiform Leibniz algebras and $n$-dimensional filiform graded filiform Leibniz algebras of length $n-1$ (see \cite{3}). By looking at the multiplication table of the non-Lie Malcev algebra $A_0$ (7-dimensional algebra over its centroid), in \cite[Section 6]{24} we have another example of algebra with a multiplicative basis. In \cite{19} we can find examples of hom-Jordan algebras admitting multiplicative bases. For Zinbiel algebras we have, for instance, that any complex $n$-dimensional nul-filiform Zinbiel algebra admits a multiplicative basis (see \cite{2}). We can also mention the simple Filippov algebras in \cite{Filippov} as examples of $n$-ary algebras admitting a multiplicative basis ($n=3,4,\dots$).

In 2004, Dzhumadil'daev studied $n$-Lie modules over $n$-Lie algebras $\mathbb{A}$ as a vector space $\mathbb{V}$ such that a semi-direct sum $\mathbb{A}+\mathbb{V}$ is once again a $n$-Lie algebra (see \cite{Dzhumadil'daev}). So a module of $n$-Lie algebra is an usual module of Lie algebra if $n=2$. The increasing interest in the study of modules over different classes of algebras, and so over linear spaces, is specially motivated by their relation with mathematical physics (see \cite{phy1, phy2, phy3, Vinogradov, phy4, phy5, chino1, chino2}).

The present paper is devoted to the study of arbitrary $k$-modules over arbitrary linear spaces admitting a multiplicative basis, by focussing on its structure. Calder\'on and Navarro introduced in \cite{Yo_Arb_Alg} the concept of arbitrary algebras admitting a multiplicative basis, and later in \cite{Yo_modules} it was extended to the setup of arbitrary modules over arbitrary linear spaces admitting a multiplicative basis.

The paper is organized as follows. In Section 2, and by inspiring in the connections in the index set techniques developed for arbitrary algebras in \cite{Yo_Arb_Alg}, we consider connections techniques to our set of indexes $I$ of the multiplicative basis so as to get a powerful tool for the study of this class of $k$-modules. By making use of these techniques we show that any $k$-module over a linear space admitting a multiplicative basis is of the form $\mathbb{V} = \oplus_{\alpha} V_{\alpha}$ with any $V_{\alpha}$ a well described $k$-submodule of $\mathbb{V}$ admitting also a multiplicative basis. In Section 3 the minimality of $\mathbb{V}$ is characterized in terms of the multiplicative basis and it is shown that, in case the basis is $\mu$-multiplicative, the above decomposition of $\mathbb{V}$ is actually by means of the family of its minimal simple $k$-submodules. In the final section we study an application of $k$-modules considering $n$-ary algebras. Throughout this paper $\mathbb{V}$ denotes an arbitrary $k$-module over an arbitrary linear space $\mathbb{W}$ in the sense that there are not restrictions on the dimensions of $\mathbb{V}$ and $\mathbb{W}$ or on the base field $\mathbb{F}$ (same for both algebraic structures).

We denote by $S_n$ the permutation group of $n$ elements. For any $\sigma \in S_n,$ we use the notation $$[w_1,\dots,w_j,\dots,w_n]_{\sigma} = [\underbrace{w_1}_{{\sigma(1)-{\rm pos}}},\dots,\underbrace{w_j}_{{\sigma(j)-{\rm pos}}},\dots,\underbrace{w_n}_{{\sigma(n)-{\rm pos}}}],$$ to mean that the element $w_j$ is placed in position $\sigma(j)$ in the $n$-linear map.

\begin{definition}\rm
Let $n \in \mathbb{N}$ and fix $k \in \mathbb{N}$ such that $1 \leq k \leq n$. Let $\mathbb{V}$ be a vector space over an arbitrary base field $\mathbb{F}$. It is said that $\mathbb{V}$ is $k${\it -moduled by a linear space} $\mathbb{W}$ (over same base field $\mathbb{F}$), or just that $\mathbb{V}$ is a $k${\it -module} over $\mathbb{W}$ if it is endowed with a $n$-linear map $$[\mathbb{V},\overset{k)}{\dots},\mathbb{V},\mathbb{W},\overset{n-k)}{\dots},\mathbb{W}]_{\sigma} \subset \mathbb{V},$$ for any $\sigma \in S_n$.
\end{definition}

\begin{example}\rm
Trivially common modules over linear spaces are examples of $k$-modules with $n=2$ and $k=1$. So the present paper contains results that generalize the results from \cite{Yo_modules}.
\end{example}

\begin{example}\rm
We recall that an $n$-ary algebra $\mathbb{A}$ is just a linear space over $\mathbb{F}$ endowed with a $n$-linear map $\langle \cdot, \dots, \cdot \rangle : \mathbb{A} \times \overset{n)}{\cdots} \times \mathbb{A} \to \mathbb{A}$ called the $n${\it -product} of $\mathbb{A}$. By depending on the identities satisfied by the $n$-product we can speak about $n$-ary associative, $n$-Lie algebras, etc. Clearly, any kind of $n$-ary algebra $\mathbb{A}$ is a $k$-module over itself in the case $n=k$. Hence, the present work extends \cite{Yo_n_algebras}.
\end{example}

\begin{example}\rm
Let $S$ be a color $n$-ary algebra, graded by a group $G$ (see \cite{KP2016}). That is, $(S, [\cdot,\dots,\cdot])$ is an algebra that decomposes as the direct sum $S = \oplus_{g \in G}S_g$ in such a way that $[S_{g_1},\dots,S_{g_n}] \subset S_{g_1+\cdots+g_n}.$ Then $S$ is a $k$-module over the linear space $S_g$ under the natural action.
\end{example}

\begin{definition}\label{11}\rm
Let $\mathbb{V}$ be a $k$-module over a linear space $\mathbb{W}$. Given a basis $\mathfrak{B}'=\{w_j\}_{j\in J}$ of $\mathbb{W}$ we say that a basis $\mathfrak{B}=\{v_i\}_{i\in I}$ of $\mathbb{V}$ is {\it multiplicative} with respect to $\mathfrak{B}'$ if for any $\sigma \in S_n,$ $i_1,\dots,i_k \in I$ and $j_{k+1},\dots, j_n \in J$ we have $[v_{i_1},\dots, v_{i_k},w_{j_{k+1}},\dots,w_{j_n}]_{\sigma} \in \mathbb{F}v_{r_{\sigma}}$ for some (unique) $r_{\sigma} \in I$.
\end{definition}

\begin{remark}\rm
Let us observe that Definition \ref{11} agrees with the case for arbitrary algebras studied in \cite{Yo_Arb_Alg} (indeed it is the particular case for $n=2$) and it is a little bit more general than the usual one in the literature (cf. \cite{m5,m2,m4,Ref,m3}) because in these references it is not supposed uniqueness on the element $j \in I$.
\end{remark}

To get examples of $k$-modules admitting multiplicative basis we have just to consider two $\mathbb{F}$-linear spaces $\mathbb{V}$ and $\mathbb{W}$ with their respective basis $\{v_i\}_{i \in I}$ and $\{w_j\}_{j \in J}.$ Let us fix two arbitrary mappings $$\alpha_{\sigma} : I \times \overset{k)}{\cdots} \times I \times J \times \overset{n-k)}{\cdots} \times J \to I \hspace{0.3cm} \mbox{and} \hspace{0.3cm} \beta_{\sigma} : I \times \overset{k)}{\cdots} \times I \times J \times \overset{n-k)}{\cdots} \times J \to \mathbb{F},$$ where as before this notation means that the element $x_l$ is placed in position $\sigma(l)$ in the $n$-linear maps. Then the space $\mathbb{V}$ with the $n$-linear map defined by $$[v_{i_1},\dots,v_{i_k}, w_{j_{k+1}},\dots,w_{j_n}]_{\sigma} := \beta_{\sigma}(i_1,\dots,i_k,j_{k+1},\dots,j_n)v_{\alpha_{\sigma}(i_1,\dots,i_k,j_{k+1},\dots,j_n)}$$ becomes a $k$-module admitting ${\mathcal B}$ as multiplicative basis.

\section{Connections in the set of indexes. Decompositions}

At following $\mathbb{W}$ denotes a linear space with a basis $\mathfrak{B}'=\{w_j\}_{j\in J}$ and $\mathbb{V}$ a $k$-module over $\mathbb{W}$ with multiplicative basis $\mathfrak{B} = \{v_i\}_{i\in I}$ with respect to $\mathfrak{B}'$. Firstly we develope connection techniques among the elements in the set of indexes $I$ as the main tool in our study. For each $i \in I$, a new variable $\overline i \notin I$ is introduced and we denote by $$\overline{I} := \{\overline i : i \in I\}$$ the set consisting of all these new symbols. Analogously, for $j \in J$ we define the symbols $\overline{j}$ and the set of symbols $\overline{J}.$ Let $\mathcal P(I)$ be the power set of $I$. From now we denote $\overline{(\overline{x})} := x \in I \cup J$ and $\overline{\mathfrak{A}} := \{\overline i : i \in \mathfrak{A}\}$ for any $\mathfrak{A} \subset \mathcal P(I)$.

Next, we consider the following operation which recover, in a
sense, certain multiplicative relations among the elements of
${\mathfrak B}.$ Let $S_n$ be the group of all permutations of $n$ elements. Given a $\sigma \in S_n$ we define $a_\sigma : I \times \overset{k)}{\cdots} \times I \times J \times \overset{n-k)}{\cdots} \times J \to \mathcal P(I)$ such as
$$\begin{array}{l}
a_\sigma(i_1,\dots,i_k,j_{k+1},\dots,j_n):=\left\{
\begin{array}{cll}
\emptyset &\text{if}&0=[v_{i_1}, \dots, v_{i_k}, w_{j_{k+1}},\dots, w_{j_n}]_{\sigma}\\
\{r_{\sigma}\} &\text{if} & 0 \neq [v_{i_1}, \dots, v_{i_k}, w_{j_{k+1}},\dots, w_{j_n}]_{\sigma} \in\mathbb{F}v_{r_{\sigma}}
\end{array}
\right.\vspace*{0.3cm}\\
\end{array}$$

\noindent and $b_\sigma : I\times \overline{I} \times \overset{k-1)}{\dots} \times \overline{I} \times \overline{J} \times \overset{n-k)}{\dots} \times \overline{J} \to \mathcal{P}(I)$ such as
$$\begin{array}{l}
b_\sigma(i,\overline{i}_2,\dots,\overline{i}_k,\overline{j}_{k+1},\dots,\overline{j}_n) := \Bigl\{i' \in I : a_\sigma(i',i_2,\dots,i_k,j_{k+1},\dots,j_n) = \{i\} \Bigr\}
\end{array}$$

\noindent Then, we consider the following operation $$\mu : I \times (I\; \dot{\cup} \; \overline{I}) \times \overset{k-1)}{\cdots} \times (I \; \dot{\cup} \; \overline{I}) \times (J\;\dot{\cup} \; \overline{J}) \times \overset{n-k)}{\cdots} \times (J \; \dot{\cup} \; \overline{J}) \to \mathcal{P}(I)$$ given by:
\medskip

\begin{itemize}
\item $\displaystyle\mu(i_1,\dots,i_k,j_{k+1},\dots,j_n):=\bigcup_{\sigma\in S_n} a_\sigma(i_1,\dots,i_k,j_{k+1},\dots,j_n)$ for $i_1,\dots,i_k \in I$ and $j_{k+1},\dots,j_n \in J$,
\item $\displaystyle\mu(i,\overline{i}_2,\dots,\overline{i}_k,\overline{j}_{k+1},\dots,\overline{j}_n) :=\bigcup_{\sigma\in S_n} b_\sigma(i,\overline{i}_2,\dots,\overline{i}_k,\overline{j}_{k+1},\dots,\overline{j}_n)$ for $i \in I,$ $\overline{i}_2,\dots,\overline{i}_k \in \overline{I}$ and $\overline{j}_{k+1},\dots,\overline{j}_n \in \overline{J}$.
\end{itemize}

\noindent We define $\mu(i,i_2,\dots,i_k,j_{k+1},\dots,j_n):=\emptyset$ if neither $\{i_2,\dots,i_k\} \subset I$ and $\{j_{k+1},\dots,j_n\} \subset J$ nor $\{i_2,\dots,i_k\} \subset \overline{I}$ and $\{j_{k+1},\dots,j_n\} \subset \overline{J}.$

\begin{remark}\label{remark1}\rm
Let us observe that $$\mu(i_1,\dots,i_k,j_{k+1},\dots,j_n)=\mu(i_{\sigma(1)},\dots,i_{\sigma(k)},j_{k+\theta(1)},\dots,j_{k+\theta(n-k)})$$ for any $\sigma\in S_k$ and $\theta \in S_{n-k}$ if $\{i_1,\dots,i_k\} \in I$ and $\{j_{k+1},\dots,j_n\} \in J$. We have that $\mu$ also verifies $$\mu(i,\overline{i}_2,\dots,\overline{i}_k,\overline{j}_{k+1},\dots,\overline{j}_n)=\mu(i,\overline{i}_{1+\sigma(1)},\dots,\overline{i}_{1+\sigma(k-1)},\overline{j}_{k+\theta(1)},\dots,\overline{j}_{k + \theta(n-k)})$$ for any $\sigma\in S_{k-1}$ and $\theta \in S_{n-k}$ if $\{\overline{i}_2,\dots,\overline{i}_k\} \in \overline{I}$ and $\{\overline{j}_{k+1},\dots,\overline{j}_n\} \in \overline{J}$.
\end{remark}

Now, we also consider the mapping $$\phi: \mathcal{P}(I) \times (I\;\dot\cup\;\overline{I}) \times \overset{k-1)}{\cdots} \times (I\;\dot\cup\;\overline{I}) \times (J\;\dot\cup\;\overline{J}) \times \overset{n-k)}{\cdots} \times (J\;\dot\cup\;\overline{J}) \to \mathcal{P}(I)$$ defined as $$\phi(\mathfrak{A},x_2,\dots,x_k,y_{k+1},\dots,y_n):=\bigcup_{i\in \mathfrak{A}} \mu(i,x_2,\dots,x_k,y_{k+1},\dots,y_n),$$ where  $\mathfrak{A} \in\mathcal{P}(I),$ $\{x_2,\dots,x_k\} \in I\;\dot\cup\;\overline{I}$ and $\{y_{k+1},\dots,y_n\} \in J\;\dot\cup\;\overline{J}.$ From Remark \ref{remark1} we have for any $\sigma \in S_{k-1}$ and $\theta \in S_{n-k},$ $$\phi(\mathfrak{A}, x_2, \dots, x_k, y_{k+1},\dots,y_n) = \phi(\mathfrak{A}, \overline{x}_{1+\sigma(1)},\dots,\overline{x}_{1+\sigma(k-1)},\overline{y}_{k+\theta(1)},\dots,\overline{y}_{k+\theta(n-k)}).$$

\noindent Next result show us the relation by $\mu$ among elements in $I\;\dot\cup\;\overline{I}$.

\begin{lemma}\label{lema1}
Let $i,i' \in I$. Given $\{x_2,\dots,x_k\} \in I\;\dot\cup\;\overline{I}$ and $\{y_{k+1},\dots,y_n\} \in J\;\dot\cup\;\overline{J}$ then $i \in \mu(i',x_2,\dots,x_k,y_{k+1},\dots,y_n)$ if and only if $i' \in \mu(i,\overline{x}_2,\dots,\overline{x}_k,\overline{y}_{k+1},\dots,\overline{y}_n)$.
\end{lemma}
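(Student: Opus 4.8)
The statement is a symmetry (reciprocity) lemma relating membership in $\mu$ under "barring" of the later arguments. The plan is to reduce everything to the defining operations $a_\sigma$ and $b_\sigma$, since $\mu$ is a union of these over $\sigma \in S_n$. The key structural observation is that barring all the arguments $x_2,\dots,x_k,y_{k+1},\dots,y_n$ switches which of the two clauses in the definition of $\mu$ applies: one side is computed via the $a_\sigma$-clause and the other via the $b_\sigma$-clause, and $b_\sigma$ was defined precisely as the "preimage" relation of $a_\sigma$.

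First I would treat the case $\{x_2,\dots,x_k\} \subset I$ and $\{y_{k+1},\dots,y_n\} \subset J$, so that $x_\ell = i_\ell$ and $y_m = j_m$ are unbarred, while on the right side $\overline{x}_\ell = \overline{i}_\ell \in \overline{I}$ and $\overline{y}_m = \overline{j}_m \in \overline{J}$ are barred (using $\overline{(\overline{x})} = x$ only affects the symmetric subcase, handled identically). Unwinding the left side, $i \in \mu(i',i_2,\dots,i_k,j_{k+1},\dots,j_n) = \bigcup_{\sigma} a_\sigma(i',i_2,\dots,i_k,j_{k+1},\dots,j_n)$ means there exists $\sigma \in S_n$ with $a_\sigma(i',i_2,\dots,i_k,j_{k+1},\dots,j_n) = \{i\}$. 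Unwinding the right side, $i' \in \mu(i,\overline{i}_2,\dots,\overline{i}_k,\overline{j}_{k+1},\dots,\overline{j}_n) = \bigcup_{\sigma} b_\sigma(i,\overline{i}_2,\dots,\overline{i}_k,\overline{j}_{k+1},\dots,\overline{j}_n)$ means there exists $\sigma \in S_n$ with $i' \in b_\sigma(i,\overline{i}_2,\dots,\overline{i}_k,\overline{j}_{k+1},\dots,\overline{j}_n)$, which by the definition of $b_\sigma$ says exactly $a_\sigma(i',i_2,\dots,i_k,j_{k+1},\dots,j_n) = \{i\}$. Thus the two conditions are literally the same existential statement over $\sigma$, giving the equivalence.

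The second case, where $\{x_2,\dots,x_k\} \subset \overline{I}$ and $\{y_{k+1},\dots,y_n\} \subset \overline{J}$, is symmetric: now the left side uses the $b_\sigma$-clause and the right side (after barring, which unbars these arguments via $\overline{(\overline{x})}=x$) uses the $a_\sigma$-clause, and one reruns the same identification. I should also note that in the remaining mixed cases $\mu$ is declared $\emptyset$ on both sides by convention, so the biconditional holds vacuously (both memberships fail). The main obstacle is purely bookkeeping rather than mathematical: one must verify that barring sends an all-unbarred tuple to an all-barred tuple (so the correct clause fires on each side) and keep the index placement consistent, for which Remark \ref{remark1} guarantees the answer is independent of the internal ordering of the arguments. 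No genuine difficulty arises beyond carefully matching the definition of $b_\sigma$ as the relational inverse of $a_\sigma$.
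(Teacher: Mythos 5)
Your proof is correct and follows essentially the same route as the paper's: both unfold $\mu$ into the unions of $a_\sigma$ and $b_\sigma$ and exploit that $b_\sigma$ is by definition the relational inverse of $a_\sigma$, splitting into the all-unbarred and all-barred cases. Your only departures are cosmetic improvements --- observing that the two memberships are literally the same existential statement over $\sigma$ (so one biconditional replaces the paper's two implications) and explicitly noting that mixed tuples make both sides empty, a case the paper leaves implicit.
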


\begin{proof}
To prove the first implication let us suppose that $i \in \mu(i',x_2,\dots,x_k,y_{k+1},\dots,y_n).$ In the case $\{x_2,\dots,x_k\} \in I$ and $\{y_{k+1},\dots,y_n\} \in J$, there exists $\sigma \in S_n$ such that $$\{i\} = a_\sigma(i',x_2,\dots,x_k,y_{k+1},\dots,y_n),$$ then $i' \in b_\sigma(i,\overline{x}_2,\dots,\overline{x}_k,\overline{y}_{k+1},\dots,\overline{y}_n) \subset \mu(i,\overline{x}_2,\dots,\overline{x}_k,\overline{y}_{k+1},\dots,\overline{y}_n)$. In the another case, if $\{x_2,\dots,x_k\} \in \overline{I}$ and $\{y_{k+1},\dots,y_n\} \in \overline{J}$ then exists $\sigma \in S_n$ satisfying $i \in b_\sigma(i',x_2,\dots,x_k,y_{k+1},\dots,y_n)$ and so $$\{i'\} = a_\sigma(i,\overline{x}_2,\dots,\overline{x}_k,\overline{y}_{k+1},\dots,\overline{y}_n) \subset \mu(i,\overline{x}_2,\dots,\overline{x}_k,\overline{y}_{k+1},\dots,\overline{y}_n).$$
To prove the converse we can argue in a similar way.
\end{proof}

\noindent As consequence of Lemma \ref{lema1} we can state the next result.

\begin{lemma}\label{lema2}
Let $\{x_2,\dots,x_k\} \in I\;\dot\cup\;\overline{I},$ $\{y_{k+1},\dots,y_n\} \in J\;\dot\cup\;\overline{J}$ and $\mathfrak{A} \in\mathcal{P}(I).$ It holds that $i \in \phi(\mathfrak{A},x_2,\dots,x_k,y_{k+1},\dots,y_n)$ if and only if $\emptyset \neq \phi(\{i\},\overline{x}_2,\dots,\overline{x}_k,\overline{y}_{k+1},\dots,\overline{y}_n) \cap \mathfrak{A}$.
\end{lemma}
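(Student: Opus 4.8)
The plan is to reduce Lemma~\ref{lema2} directly to Lemma~\ref{lema1} by unwinding the definition of $\phi$ as a union of $\mu$'s. Recall that
$$\phi(\mathfrak{A},x_2,\dots,x_k,y_{k+1},\dots,y_n)=\bigcup_{i'\in\mathfrak{A}}\mu(i',x_2,\dots,x_k,y_{k+1},\dots,y_n),$$
so the condition $i\in\phi(\mathfrak{A},x_2,\dots,x_k,y_{k+1},\dots,y_n)$ is equivalent, just by the definition of union, to the existence of some $i'\in\mathfrak{A}$ with $i\in\mu(i',x_2,\dots,x_k,y_{k+1},\dots,y_n)$. This is the first step: translate the $\phi$-membership on the left-hand side into an existential statement about $\mu$-membership with a witness $i'$ lying in $\mathfrak{A}$.

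The second step is to apply Lemma~\ref{lema1} to flip that $\mu$-membership. For the witness $i'$, Lemma~\ref{lema1} says $i\in\mu(i',x_2,\dots,x_k,y_{k+1},\dots,y_n)$ holds if and only if $i'\in\mu(i,\overline{x}_2,\dots,\overline{x}_k,\overline{y}_{k+1},\dots,\overline{y}_n)$. Thus the existence of a witness $i'\in\mathfrak{A}$ on the left is equivalent to the existence of some $i'\in\mathfrak{A}$ lying in the set $\mu(i,\overline{x}_2,\dots,\overline{x}_k,\overline{y}_{k+1},\dots,\overline{y}_n)$. In other words, $\mathfrak{A}$ and this $\mu$-set share a common element, i.e. their intersection is nonempty.

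The final step is to recognize $\mu(i,\overline{x}_2,\dots,\overline{x}_k,\overline{y}_{k+1},\dots,\overline{y}_n)$ as precisely $\phi(\{i\},\overline{x}_2,\dots,\overline{x}_k,\overline{y}_{k+1},\dots,\overline{y}_n)$, since $\phi(\{i\},\dots)=\bigcup_{i''\in\{i\}}\mu(i'',\dots)=\mu(i,\dots)$ when the first argument is the singleton $\{i\}$. Hence ``some $i'\in\mathfrak{A}$ also lies in $\mu(i,\overline{x}_2,\dots)$'' is exactly the statement $\phi(\{i\},\overline{x}_2,\dots,\overline{x}_k,\overline{y}_{k+1},\dots,\overline{y}_n)\cap\mathfrak{A}\neq\emptyset$. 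Chaining these three equivalences together yields the claim.

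I do not anticipate a genuine obstacle here; the result is essentially a formal corollary of Lemma~\ref{lema1}, as the prose preceding the statement already signals. The only point requiring slight care is keeping the bar-operation bookkeeping consistent: one must verify that the hypotheses of Lemma~\ref{lema1} are met for each witness (namely that the mixed-bar conventions $\{x_2,\dots,x_k\}\in I\;\dot\cup\;\overline{I}$ and $\{y_{k+1},\dots,y_n\}\in J\;\dot\cup\;\overline{J}$ are exactly what Lemma~\ref{lema1} assumes), and that applying the involution $\overline{(\overline{x})}=x$ when passing from the barred arguments back does not introduce any asymmetry. Since the equivalence in Lemma~\ref{lema1} is stated for arbitrary $i,i'\in I$ and for both parities of the arguments, each application is legitimate regardless of which of $I\;\dot\cup\;\overline{I}$ or $J\;\dot\cup\;\overline{J}$ the entries fall into, so the whole argument goes through uniformly.
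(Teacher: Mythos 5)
Your proposal is correct and follows essentially the same route as the paper: both unwind $\phi(\mathfrak{A},\dots)$ into a witness $i'\in\mathfrak{A}$ with $i\in\mu(i',\dots)$, apply Lemma~\ref{lema1} to flip the $\mu$-membership, and identify $\mu(i,\overline{x}_2,\dots,\overline{y}_n)$ with $\phi(\{i\},\overline{x}_2,\dots,\overline{y}_n)$ to conclude the intersection is nonempty. Your version is, if anything, slightly cleaner in phrasing everything as a single chain of equivalences, where the paper proves one implication and dismisses the converse as ``similar.''
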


\begin{proof}
Let us suppose that $i \in \phi(\mathfrak{A}, x_2, \dots, x_k, y_{k+1}, \dots, y_n)$. Then there exists $i' \in \mathfrak{A}$ such that $i \in \mu(i',x_2,\dots,x_k,y_{k+1},\dots,y_n)$. By Lemma \ref{lema1} $$i' \in\mu(i,\overline{x}_2,\dots,\overline{x}_k,\overline{y}_{k+1},\dots,\overline{y}_n) \subset \phi(\{i\},\overline{x}_2,\dots,\overline{x}_k, \overline{y}_{k+1},\dots,\overline{y}_n).$$ So
$$i' \in\phi(\{i\},\overline{x}_2,\dots,\overline{x}_k, \overline{y}_{k+1},\dots,\overline{y}_n) \cap \mathfrak{A} \neq \emptyset.$$

\noindent Arguing in a similar way the converse is proven.
\end{proof}

For an easier comprenhesion we firstly present a shorter notation. Let $m$ be a natural number, we denote $X_m := (x_{m,2},\dots,x_{m,k}) \in I\;\dot\cup\; \overline{I} \times \overset{k-1)}{\dots} \times I\;\dot\cup\; \overline{I}$ and $Y_m := (y_{m,k+1},\dots,y_{m,n}) \in J\;\dot\cup\; \overline{J} \times \overset{n-k)}{\dots} \times J\;\dot\cup\; \overline{J}$. Let us also denote $$\overline{X}_m := (\overline{x}_{m,2},\dots,\overline{x}_{m,k}) \hspace{0.3cm} \mbox{and} \hspace{0.3cm} \overline{Y}_m := (\overline{y}_{m,k+1},\dots,\overline{y}_{m,n}).$$

\noindent Additionally, for $t \geq 1,$ by $\{X_1,Y_1,\dots,X_t,Y_t\}$ we mean the set of elements $$\{x_{1,2},\dots,x_{1,k},y_{1,k+1},\dots,y_{1,n}, \dots, x_{t,2},\dots,x_{t,k},y_{t,k+1},\dots,y_{t,n}\}.$$

\noindent Finally, for $\mathfrak{A} \in \mathcal{P}(I)$ we denote $\phi(\mathfrak{A},X_m,Y_m) := \phi(\mathfrak{A},x_{m,2},\dots,x_{m,k},y_{m,k+1},\dots,y_{m,n}).$

\begin{definition}\label{connection}\rm
Let $i$ and $i'$ be distinct elements in $I$. We say that $i$ is {\it connected} to $i'$ if there exists a subset $\{X_1,Y_1,\dots,X_t,Y_t\} \subset I\;\dot\cup\; \overline{I} \; \dot \cup\;  J \; \dot \cup\; \overline{J},$ for certain $t \geq 1$, such that the following conditions hold:

\begin{enumerate}
\item [{\rm 1.}] $\phi(\{i\},X_1,Y_1) \neq\emptyset$,\\
$\phi(\phi(\{i\},X_1,Y_1),X_2,Y_2) \neq\emptyset$,\\
$\hspace*{2cm} \vdots$\\
$\phi(\phi(\dots\phi(\{i\},X_1,Y_1),\dots), X_{t-1},Y_{t-1}) \neq\emptyset$.

\bigskip

\item [{\rm 2.}] $i' \in \phi(\phi(\dots\phi(\{i\},X_1,Y_1),\dots),X_t,Y_t).$



\end{enumerate}

\bigskip

\noindent The subset $\{X_1,Y_1,\dots,X_t,Y_t\}$ is a {\it connection} from $i$ to $i'$ and we accept $i$ to be connected to itself.
\end{definition}

Our aim is to show that the connection relation is of
equivalence. Previously we check the symmetric property.

\begin{lemma}\label{lema3}
Let $\{X_1,Y_1,\dots,X_t,Y_t\}$ be a connection from some $i$ to some $i'$ where $i,i' \in I$ with $i \neq i'$ and $t \geq 1$. Then the set $\{\overline{X}_t,\overline{Y}_t, \dots,\overline{X}_1,\overline{Y}_1\}$ is a connection from $i'$ to $i$.
\end{lemma}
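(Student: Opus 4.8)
The plan is to run two parallel chains of iterated $\phi$-images and to link them step by step through Lemma \ref{lema2}. Write the forward chain determined by the given connection as $\mathfrak{A}_0 := \{i\}$ and $\mathfrak{A}_s := \phi(\mathfrak{A}_{s-1},X_s,Y_s)$ for $1 \le s \le t$, so that the hypotheses of Definition \ref{connection} say exactly that $\mathfrak{A}_1,\dots,\mathfrak{A}_{t-1}$ are nonempty and $i' \in \mathfrak{A}_t$. For the candidate reverse connection I introduce the backward chain $\mathfrak{C}_0 := \{i'\}$ and $\mathfrak{C}_s := \phi(\mathfrak{C}_{s-1},\overline{X}_{t-s+1},\overline{Y}_{t-s+1})$ for $1 \le s \le t$; unwinding the indices, this is precisely the iterated application of $\phi$ along $\{\overline{X}_t,\overline{Y}_t,\dots,\overline{X}_1,\overline{Y}_1\}$ starting from $\{i'\}$. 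The goal reduces to showing that each of $\mathfrak{C}_1,\dots,\mathfrak{C}_{t-1}$ is nonempty (condition 1 of Definition \ref{connection}) and that $i \in \mathfrak{C}_t$ (condition 2).

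The heart of the argument is the inductive claim that $\mathfrak{C}_m \cap \mathfrak{A}_{t-m} \neq \emptyset$ for every $0 \le m \le t$. The base case $m=0$ is immediate since $i' \in \mathfrak{A}_t$. For the inductive step, assume $\mathfrak{C}_m \cap \mathfrak{A}_{t-m} \neq \emptyset$ and pick $p \in \mathfrak{C}_m \cap \mathfrak{A}_{t-m}$. From $p \in \mathfrak{A}_{t-m} = \phi(\mathfrak{A}_{t-m-1},X_{t-m},Y_{t-m})$ and Lemma \ref{lema2} I obtain $\phi(\{p\},\overline{X}_{t-m},\overline{Y}_{t-m}) \cap \mathfrak{A}_{t-m-1} \neq \emptyset$. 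Since $p \in \mathfrak{C}_m$ and $\phi$ is monotone in its first argument (being a union of the sets $\mu(\cdot,\dots)$ over that argument), I have $\phi(\{p\},\overline{X}_{t-m},\overline{Y}_{t-m}) \subset \phi(\mathfrak{C}_m,\overline{X}_{t-m},\overline{Y}_{t-m}) = \mathfrak{C}_{m+1}$, whence $\mathfrak{C}_{m+1} \cap \mathfrak{A}_{t-m-1} \neq \emptyset$. The index $t-m$ carried by the barred entries is exactly the one used to pass from $\mathfrak{C}_m$ to $\mathfrak{C}_{m+1}$, so the two chains stay aligned throughout.

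Once the claim is established the conclusion is immediate: the inequality $\mathfrak{C}_m \cap \mathfrak{A}_{t-m} \neq \emptyset$ forces $\mathfrak{C}_m \neq \emptyset$ for all $m$, which gives condition 1 for $1 \le m \le t-1$, while the case $m=t$ reads $\mathfrak{C}_t \cap \{i\} \neq \emptyset$, that is $i \in \mathfrak{C}_t$, which is condition 2. As $t \ge 1$ and $i \neq i'$ are inherited from the hypotheses, $\{\overline{X}_t,\overline{Y}_t,\dots,\overline{X}_1,\overline{Y}_1\}$ is then a genuine connection from $i'$ to $i$. I expect the only delicate point to be the bookkeeping: ensuring that the barred tuple used at the $(m{+}1)$-th backward step carries the index $t-m$ matching the forward step $\mathfrak{A}_{t-m-1}\to\mathfrak{A}_{t-m}$, so that Lemma \ref{lema2} applies with the correct arguments; the involution $\overline{(\overline{x})}=x$ guarantees that these barred entries are legitimate inputs to $\phi$.
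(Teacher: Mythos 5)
Your proof is correct, and it rests on the same key fact as the paper's proof---Lemma \ref{lema2}, which reverses a single application of $\phi$---but you organize the induction differently. The paper inducts on the length $t$ of the connection: it uses Lemma \ref{lema2} to peel off the last pair $(X_{t+1},Y_{t+1})$, producing an element $h$ in the penultimate set $\mathfrak{A}$, notes that the truncated set $\{X_1,Y_1,\dots,X_t,Y_t\}$ is a connection from $i$ to $h$, and applies the inductive hypothesis to reverse that shorter connection. You instead fix $t$ and induct on the number $m$ of reversed steps, maintaining the invariant $\mathfrak{C}_m \cap \mathfrak{A}_{t-m} \neq \emptyset$ linking the backward chain to the forward one; your appeal to monotonicity of $\phi$ in its first argument is legitimate, since $\phi(\mathfrak{A},x_2,\dots,y_n)=\bigcup_{i\in\mathfrak{A}}\mu(i,x_2,\dots,y_n)$, and your indexing of the barred tuples matches Lemma \ref{lema2} exactly. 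The substance is the same, but your formulation has a small robustness advantage: in the paper's inductive step the hypothesis is invoked for a ``connection from $i$ to $h$'' where nothing prevents $h=i$, a degenerate case not literally covered by the lemma's standing assumption $i\neq i'$ (though harmless, since the chain conditions still hold); your invariant makes no distinctness assumption at intermediate stages, so this corner case never arises. Both proofs deliver conditions 1 and 2 of Definition \ref{connection} for $\{\overline{X}_t,\overline{Y}_t,\dots,\overline{X}_1,\overline{Y}_1\}$ in the same way: nonemptiness of $\mathfrak{C}_1,\dots,\mathfrak{C}_{t-1}$ from the invariant, and $i\in\mathfrak{C}_t$ from the case $m=t$.
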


\begin{proof}
Let us prove by induction on $t$.

For $t=1$ we have that $i' \in \phi(\{i\},X_1,Y_1)$. It means that $i' \in \mu(i,X_1,Y_1)$ so, by Lemma \ref{lema1}, $i \in\mu(i',\overline{X}_1,\overline{Y}_1) \subset \phi(\{i'\},\overline{X}_1,\overline{Y}_1)$. Hence $\{\overline{X}_1,\overline{Y}_1\}$ is a connection from $i'$ to $i$.

Let us suppose that the assertion holds for any connection with certain $t$, where $t \geq 1$, and let us show this assertion also holds for any connection from $i$ to $i'$ of the form $$\{X_1,Y_1,\dots,X_t,Y_t,X_{t+1},Y_{t+1}\}.$$

\noindent Denoting the set $\mathfrak{A} := \phi(\phi(\dots\phi(\{i\},X_1,Y_1),\dots),X_t,Y_t)$ taking into the account second condition of the Definition \ref{connection} we have that
$$i' \in\phi(\mathfrak{A}, X_{t+1},Y_{t+1}).$$

\noindent Then, by the Lemma \ref{lema2}, $\phi(\{i'\},\overline X_{t+1},\overline Y_{t+1}) \cap \mathfrak{A} \neq \emptyset$ so we can take $h \in \mathfrak{A}$ such that
\begin{equation}\label{eqq1}
h \in \phi(\{i'\},\overline X_{t+1},\overline Y_{t+1}).
\end{equation}

\noindent From $h \in \mathfrak{A}$ we have that $\{X_1,Y_1,\dots, X_t, Y_t\}$ is a connection from $i$ to $h$, so by induction hypothesys we get $$\{\overline{X}_t,\overline{Y}_t, \dots,\overline{X}_1,\overline{Y}_1\}$$ connecting $h$ with $i$. From here and Equation \eqref{eqq1} we have
$$i \in \phi(\phi(\dots\phi(\phi(\{i'\},\overline X_{t+1},\overline Y_{t+1}),\overline{X}_t,\overline{Y}_t)\dots),\overline{X}_1,\overline{Y}_1).$$

\noindent So $\{\overline X_{t+1},\overline X_{t+1}, \overline X_t, \overline Y_t,\dots,\overline X_1, \overline Y_1\}$ is a connection from $i'$ to $i$, which completes the proof.
\end{proof}

\noindent Now we can assert next result.

\begin{proposition}
The relation $\sim$ in $I$, defined by $i \sim i'$ if and only if $i$ is connected to $i'$, is an equivalence relation.
\end{proposition}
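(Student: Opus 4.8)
The plan is to establish that $\sim$ is reflexive, symmetric, and transitive, handling each property in turn. Reflexivity is immediate since the final sentence of Definition \ref{connection} explicitly declares that every $i \in I$ is connected to itself. Symmetry is precisely the content of Lemma \ref{lema3}: if $\{X_1,Y_1,\dots,X_t,Y_t\}$ is a connection from $i$ to $i'$ (with $i \neq i'$), then $\{\overline{X}_t,\overline{Y}_t,\dots,\overline{X}_1,\overline{Y}_1\}$ is a connection from $i'$ to $i$, so $i \sim i'$ implies $i' \sim i$; the case $i = i'$ is covered by reflexivity. Thus the only property requiring genuine work is transitivity.

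For transitivity, I would suppose $i \sim i'$ and $i' \sim i''$ with all three distinct (the degenerate cases where two of them coincide reduce to reflexivity or symmetry). By hypothesis there is a connection $\{X_1,Y_1,\dots,X_t,Y_t\}$ from $i$ to $i'$ and a connection $\{X_1',Y_1',\dots,X_s',Y_s'\}$ from $i'$ to $i''$. The natural idea is to concatenate these two connections into the single family
\[
\{X_1,Y_1,\dots,X_t,Y_t,X_1',Y_1',\dots,X_s',Y_s'\}
\]
and verify that it is a connection from $i$ to $i''$. Writing $\mathfrak{A} := \phi(\phi(\dots\phi(\{i\},X_1,Y_1),\dots),X_t,Y_t)$, the first connection gives $i' \in \mathfrak{A}$, and all the intermediate iterates appearing in condition~1 for the first connection are nonempty by assumption.

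The key point is to propagate nonemptiness through the second half of the concatenated chain. Since $\phi$ is a union over $\mathfrak{A}$ of the sets $\mu(i',\cdot)$, and $i' \in \mathfrak{A}$, we get the monotonicity
\[
\phi(\{i'\},X_m',Y_m') \subset \phi(\mathfrak{A},X_m',Y_m')
\]
for every $m$, and more generally each iterated $\phi$ built on $\{i'\}$ is contained in the corresponding iterated $\phi$ built on $\mathfrak{A}$. Because the chain starting from $\{i'\}$ satisfies condition~1 of Definition \ref{connection} (it is the connection from $i'$ to $i''$) and ends with $i'' \in \phi(\phi(\dots\phi(\{i'\},X_1',Y_1'),\dots),X_s',Y_s')$, this containment forces the analogous iterates starting from $\mathfrak{A}$ to be nonempty as well and to contain $i''$ at the final step. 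I expect the main obstacle to be precisely this monotonicity bookkeeping: one must observe that $\phi$ is monotone in its first argument (a set-valued union), so enlarging the seed from $\{i'\}$ to the superset $\mathfrak{A}$ only enlarges every subsequent iterate, thereby preserving both the nonemptiness required in condition~1 and the membership of $i''$ required in condition~2. Once this monotonicity is made explicit, concatenation yields the desired connection from $i$ to $i''$, establishing transitivity and completing the proof that $\sim$ is an equivalence relation.
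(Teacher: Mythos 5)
Your proposal takes exactly the same route as the paper's proof: reflexivity from the closing clause of Definition \ref{connection}, symmetry from Lemma \ref{lema3}, and transitivity by concatenating the two connections $\{X_1,Y_1,\dots,X_t,Y_t\}$ and $\{X'_1,Y'_1,\dots,X'_p,Y'_p\}$ into a single connection from $i$ to $i''$. The paper dismisses the verification of the concatenation as ``easy to prove,'' whereas you correctly supply the missing detail --- the monotonicity of $\phi$ in its first (set-valued) argument, which propagates nonemptiness and the membership of $i''$ from the iterates seeded at $\{i'\}$ to those seeded at $\mathfrak{A}$ --- so your write-up is a sound and slightly more complete version of the same argument.
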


\begin{proof}
The reflexive and the symmetric character of $\sim$ is given by  Definition \ref{connection} and Lemma \ref{lema3}. If we consider the connections $\{X_1,Y_1,\dots,X_t,Y_t\}$ and $\{X'_1,Y'_1,\dots,X'_p,Y'_p\}$ from $i$ to $i'$ and from $i'$ to $i''$ respectively, then it is easy to prove that $$\{X_1,Y_1, \dots,X_t,Y_t,X'_1,Y'_1,\dots,X'_p,Y'_p\}$$ is a connection from $i$ to $i''$. So $\sim$ is transitive and consequently an equivalence relation.
\end{proof}

By the above proposition we can introduce the quotient set
$$I/\sim := \{[i] : i \in I\},$$ where $[i]$ denotes the class of equivalence of $i$, that is, the set of elements in $I$ which are connected to $i$. For any $[i] \in I/\sim$ we define the linear subspace $$V_{[i]} := \bigoplus_{i'\in [i]}\mathbb{F}v_{i'}.$$

\noindent A linear subspace $U$ of a $k$-module $\mathbb{V}$ is said a $k${\it -submodule} if it satisfies $$[U, \mathbb{V}, \overset{k-1)}{\dots},\mathbb{V},\mathbb{W},\overset{n-k)}{\dots},\mathbb{W}]_{\sigma} \subset U$$ for any $\sigma \in S_n$.

\begin{proposition}\label{lema_submodulo}
For any $[i] \in I/\sim$ we have that $V_{[i]}$ is a $k$-submodule of $\mathbb{V}$.
\end{proposition}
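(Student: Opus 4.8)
The plan is to reduce the submodule condition to basis elements and then read off the conclusion from the single-step connection relation. First I would note that, by $n$-linearity of the map and since $V_{[i]} = \bigoplus_{i' \in [i]} \mathbb{F}v_{i'}$, it suffices to verify
$$[v_{i'}, v_{i_2}, \dots, v_{i_k}, w_{j_{k+1}}, \dots, w_{j_n}]_\sigma \in V_{[i]}$$
for every $\sigma \in S_n$, every $i' \in [i]$, every $i_2, \dots, i_k \in I$ and every $j_{k+1}, \dots, j_n \in J$, because an arbitrary element of $V_{[i]}$ (resp. of $\mathbb{V}$, resp. of $\mathbb{W}$) is a finite linear combination of the $v_{i'}$ with $i' \in [i]$ (resp. of the $v_{i_l}$, resp. of the $w_{j_l}$).

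Fixing such a product, I would invoke the multiplicative basis property of Definition \ref{11} to get $[v_{i'}, v_{i_2}, \dots, v_{i_k}, w_{j_{k+1}}, \dots, w_{j_n}]_\sigma \in \mathbb{F}v_{r_\sigma}$ for a unique $r_\sigma \in I$. If the product vanishes it lies in $V_{[i]}$ trivially, so I would assume it is nonzero. Then $\{r_\sigma\} = a_\sigma(i', i_2, \dots, i_k, j_{k+1}, \dots, j_n)$, and since all the indices involved lie in $I$ and $J$, the first clause defining $\mu$ yields
$$r_\sigma \in a_\sigma(i', i_2, \dots, i_k, j_{k+1}, \dots, j_n) \subset \mu(i', i_2, \dots, i_k, j_{k+1}, \dots, j_n) = \phi(\{i'\}, X_1, Y_1),$$
where I write $X_1 := (i_2, \dots, i_k)$ and $Y_1 := (j_{k+1}, \dots, j_n)$.

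It then remains to see that $r_\sigma \in [i]$. If $r_\sigma = i'$ this is clear, since $i' \in [i]$. Otherwise $r_\sigma \neq i'$ and the displayed membership exhibits $\{X_1, Y_1\}$ as a connection from $i'$ to $r_\sigma$ in the sense of Definition \ref{connection}: condition 1 is vacuous for $t = 1$, and condition 2 is precisely $r_\sigma \in \phi(\{i'\}, X_1, Y_1)$. Hence $r_\sigma \sim i'$, and because $i' \sim i$ and $\sim$ is an equivalence relation I conclude $r_\sigma \sim i$, i.e. $r_\sigma \in [i]$. Consequently $[v_{i'}, v_{i_2}, \dots, v_{i_k}, w_{j_{k+1}}, \dots, w_{j_n}]_\sigma \in \mathbb{F}v_{r_\sigma} \subset V_{[i]}$, which by the reduction above completes the verification.

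The argument is essentially a matter of unwinding the definitions of $a_\sigma$, $\mu$ and $\phi$, so I do not expect a serious obstacle. The only subtlety is the bookkeeping in the final step: the connection relation in Definition \ref{connection} is stated for distinct indices, so the case $r_\sigma = i'$ must be separated out (trivially) from the case $r_\sigma \neq i'$ before appealing to the single-step connection. One should also keep in mind the degenerate ranges $k = 1$ (no extra $\mathbb{V}$-arguments, so $X_1$ empty) and $k = n$ (no $\mathbb{W}$-arguments, so $Y_1$ empty), which are handled by exactly the same reasoning.
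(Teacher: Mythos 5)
Your proof is correct and follows essentially the same route as the paper: after reducing to basis elements, a nonzero product $[v_{i'},v_{i_2},\dots,v_{i_k},w_{j_{k+1}},\dots,w_{j_n}]_\sigma \in \mathbb{F}v_{r_\sigma}$ gives $r_\sigma \in \phi(\{i'\},X_1,Y_1)$, so $\{X_1,Y_1\}$ is a one-step connection making $r_\sigma \sim i' \sim i$, hence $r_\sigma \in [i]$. Your extra care with the zero product, the case $r_\sigma = i'$, and the degenerate ranges $k=1$ and $k=n$ merely makes explicit details the paper leaves implicit.
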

\begin{proof}
We need to check $[V_{[i]},\mathbb{V},\overset{k-1)}{\dots},\mathbb{V},\mathbb{W},\overset{n-k)}{\dots},\mathbb{W}]_{\sigma} \subset V_{[i]}$ for any $\sigma \in S_n$. Suppose there exist $i_1 \in [i], i_2,\dots,i_k \in I$ and $j_{k+1},\dots, j_n \in J$ such that $$0 \neq [v_{i_1},v_{i_2},\dots, v_{i_k}, w_{j_1},\dots,w_{j_n}]_{\sigma} \in \mathbb{F}v_r,$$ for some $r \in I$. Therefore $r \in \phi(\{i_1\},i_2,\dots,i_k,j_{k+1},\dots,j_n)$. Considering the connection $\{i_2,\dots,i_k,j_{k+1},\dots,j_n\}$ we get $i_1 \sim r$, and by transitivity $r \in [i]$. Consequently $0 \neq [v_{i_1},\dots, v_{i_k},w_{j_1},\dots,w_{j_n}]_{\sigma} \in V_{[i]}$ and so $V_{[i]}$ is a $k$-submodule of $\mathbb{V}$.
\end{proof}

\noindent By Proposition \ref{lema_submodulo} we have next result.

\begin{proposition}\label{lema_producto}
For any $[i],[h] \in I/\sim$ such that $[i] \neq [h]$ we have $$[V_{[i]},V_{[h]},\mathbb{V},\overset{k-2)}{\dots},\mathbb{V},\mathbb{W},\overset{n-k)}{\dots},\mathbb{W}]_{\sigma} = 0.$$
\end{proposition}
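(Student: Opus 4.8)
The plan is to argue by contradiction, exploiting that the two distinguished factors $V_{[i]}$ and $V_{[h]}$ force their common output index to belong to both equivalence classes. Suppose the displayed product were nonzero. Since everything is evaluated on the multiplicative basis, there would exist indices $i_1 \in [i]$, $i_2 \in [h]$, $i_3,\dots,i_k \in I$, $j_{k+1},\dots,j_n \in J$, a permutation $\sigma \in S_n$, and some $r \in I$ with
$$0 \neq [v_{i_1}, v_{i_2}, v_{i_3}, \dots, v_{i_k}, w_{j_{k+1}}, \dots, w_{j_n}]_{\sigma} \in \mathbb{F}v_r.$$
The whole argument then reduces to showing that such an $r$ would have to satisfy $r \in [i]$ and $r \in [h]$ at the same time.

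First I would read off $r \in [i]$. Because $v_{i_1} \in V_{[i]}$ and, by Proposition \ref{lema_submodulo}, $V_{[i]}$ is a $k$-submodule of $\mathbb{V}$, the nonzero element above lies in $V_{[i]}$; being a scalar multiple of the basis vector $v_r$, this forces $r \in [i]$. Equivalently, the one-step connection given by $X_1=(i_2,\dots,i_k)$ and $Y_1=(j_{k+1},\dots,j_n)$ witnesses $r \in \phi(\{i_1\},i_2,\dots,i_k,j_{k+1},\dots,j_n)$, hence $i_1 \sim r$ and $r \in [i]$.

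Next comes the key point, and the step I expect to require the most care: obtaining $r \in [h]$ as well. Here $v_{i_2}$ occupies the second $\mathbb{V}$-slot, whereas the definition of $k$-submodule places the submodule in the first slot. The remedy is the full $S_n$-invariance already built into both the multiplicative basis condition and the operation $\mu$. Concretely, the element $[v_{i_1}, v_{i_2}, v_{i_3}, \dots]_{\sigma}$ equals $[v_{i_2}, v_{i_1}, v_{i_3}, \dots]_{\sigma'}$ for $\sigma' = \sigma \circ (1\,2) \in S_n$, since both prescribe exactly the same placement of the $n$ arguments into the $n$ positions of the map; alternatively one invokes the symmetry of $\mu$ in its unbarred $I$-entries recorded in Remark \ref{remark1}. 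Applying Proposition \ref{lema_submodulo} to $V_{[h]}$, which contains $v_{i_2}$, now yields that this same nonzero multiple of $v_r$ lies in $V_{[h]}$, whence $r \in [h]$.

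Finally I would close the argument: $r \in [i] \cap [h]$ would make $v_r$ a common nonzero vector of $V_{[i]}$ and $V_{[h]}$, so in particular $[i] \cap [h] \neq \emptyset$. Since $\sim$ is an equivalence relation, distinct classes are disjoint, so this forces $[i] = [h]$, contradicting the hypothesis $[i] \neq [h]$. Hence the product must vanish for every $\sigma \in S_n$, as claimed. The only genuinely delicate point is the slot-swapping in the third paragraph, where one must check that relabelling the two $\mathbb{V}$-arguments is legitimate; this is precisely what the quantification over all $\sigma \in S_n$ (and the resulting symmetry of $\mu$) guarantees.
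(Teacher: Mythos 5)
Your proof is correct and takes essentially the same route as the paper, which gives no explicit argument but derives the proposition directly from Proposition \ref{lema_submodulo}: the nonzero product would lie in $V_{[i]}$ by the submodule property, and, after the tautological relabelling $\sigma' = \sigma \circ (1\,2)$ that puts the $V_{[h]}$-factor in the first slot (legitimate precisely because the structure is quantified over all $\sigma \in S_n$), also in $V_{[h]}$, contradicting the disjointness of distinct equivalence classes. Your spelled-out verification of the slot swap is exactly the implicit content of the paper's one-line justification.
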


\begin{definition}\label{inherited_basis}\rm
We say that a $k$-submodule $U$ of $\mathbb{V}$ admits a multiplicative basis $\mathfrak{B}_U$ {\it inherited} from $\mathfrak{B}$ if $\mathfrak{B}_U \subset \mathfrak{B}$.
\end{definition}

\noindent Observe that any $k$-submodule $V_{[i]} \subset \mathbb{V}$ admits an inherited basis $\mathfrak{B}_{[i]} := \{v_{[i']} : i' \in [i]\}$. So we can assert

\begin{theorem}\label{theo1}
Let $\mathbb{V}$ be a $k$-module admitting a multiplicative basis $\mathfrak{B}$ with respect to a fixed basis of $\mathbb{W}$. Then $$\mathbb{V} = \bigoplus_{[i]\in I/\sim}V_{[i]},$$ being any $V_{[i]} \subset \mathbb{V}$ a $k$-submodule admitting a multiplicative basis $\mathfrak{B}_{[i]}$ inherited from $\mathfrak{B}$. Moreover, it is satisfied $[V_{[i]},V_{[h]},\mathbb{V}, \overset{k-2)}{\dots},\mathbb{V},\mathbb{W},\overset{n-k)}{\dots},\mathbb{W}]_{\sigma} = 0,$ with $\sigma \in S_n,$ in case $[i] \neq [h].$
\end{theorem}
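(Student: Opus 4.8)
The plan is to assemble Theorem \ref{theo1} from the machinery already developed, so that almost all the real work has been done in the preceding lemmas and propositions. First I would establish the decomposition $\mathbb{V} = \bigoplus_{[i]\in I/\sim} V_{[i]}$ at the level of sets of indices. Since $\sim$ is an equivalence relation on $I$, the classes $\{[i] : i\in I/\sim\}$ partition $I$; that is, $I = \bigcup_{[i]} [i]$ as a disjoint union. Because $\mathfrak{B} = \{v_i\}_{i\in I}$ is a basis of $\mathbb{V}$ and each $V_{[i]} = \bigoplus_{i'\in[i]} \mathbb{F}v_{i'}$ is the span of the basis vectors indexed by one class, the partition of $I$ immediately lifts to the direct sum $\mathbb{V} = \bigoplus_{[i]} V_{[i]}$: spanning follows from $\bigcup_{[i]}[i] = I$, and the directness follows from the linear independence of $\mathfrak{B}$ together with the disjointness of distinct classes.

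Next I would read off the remaining two assertions directly from the results already proved. That each $V_{[i]}$ is a $k$-submodule is exactly Proposition \ref{lema_submodulo}, and that it admits a multiplicative basis $\mathfrak{B}_{[i]} = \{v_{i'} : i'\in[i]\}$ inherited from $\mathfrak{B}$ in the sense of Definition \ref{inherited_basis} is the observation recorded just before the theorem statement, since $\mathfrak{B}_{[i]}\subset\mathfrak{B}$ and the multiplicativity of $\mathfrak{B}$ restricts to each class. Finally, the orthogonality relation
\[
[V_{[i]},V_{[h]},\mathbb{V},\overset{k-2)}{\dots},\mathbb{V},\mathbb{W},\overset{n-k)}{\dots},\mathbb{W}]_{\sigma} = 0 \quad \text{for } [i]\neq[h]
\]
is precisely Proposition \ref{lema_producto}. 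So the body of the proof is essentially a bookkeeping assembly: cite the equivalence relation to get the partition, transfer it to a vector-space direct sum, and invoke Propositions \ref{lema_submodulo} and \ref{lema_producto} together with the inherited-basis observation.

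There is really no deep obstacle here, since the conceptual content lives in the earlier connection lemmas; the one point deserving a sentence of care is the passage from a set-theoretic partition of $I$ to a genuine internal direct sum of subspaces of $\mathbb{V}$. I would make explicit that $V_{[i]} \cap \sum_{[h]\neq[i]} V_{[h]} = 0$, which holds because a nonzero element of the intersection would be a nontrivial linear combination of basis vectors indexed by $[i]$ that also lies in the span of basis vectors indexed by the complementary classes, contradicting the linear independence of $\mathfrak{B}$ once the index sets $[i]$ and $\bigcup_{[h]\neq[i]}[h]$ are disjoint. With that verified, the three displayed claims of the theorem are simply the three cited results, and the proof concludes.
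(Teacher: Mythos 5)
Your proposal is correct and takes essentially the same approach as the paper, which in fact states Theorem \ref{theo1} with no separate proof precisely because it is the direct assembly you describe: the partition of $I$ by the equivalence relation $\sim$, the definition of $V_{[i]}$ as the span of class-indexed basis vectors, Proposition \ref{lema_submodulo} for the $k$-submodule property, Proposition \ref{lema_producto} for the orthogonality, and the inherited-basis observation preceding the theorem. Your explicit verification that the set-theoretic partition yields a genuine internal direct sum is a small added point of rigor that the paper leaves implicit.
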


\noindent We call that a $k$-module $\mathbb{V}$ is {\it simple} if its only $k$-submodules are $\{0\}$ and $\mathbb{V}$.

\begin{corollary}\label{coro1}
If $\mathbb{V}$ is simple then any couple of elements of $I$ are connected.
\end{corollary}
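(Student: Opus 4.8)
The plan is to argue by contraposition, leveraging the decomposition established in Theorem \ref{theo1}. Rather than proving directly that simplicity forces connectedness, I would show that the failure of connectedness produces a proper nonzero $k$-submodule, contradicting simplicity.

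First I would suppose, for contradiction, that there exist two elements $i, i' \in I$ which are not connected. Since the connection relation $\sim$ is an equivalence relation, this means $[i] \neq [i']$, so the quotient $I/\!\sim$ contains at least two distinct classes. By Theorem \ref{theo1} we then have the direct sum decomposition $\mathbb{V} = \bigoplus_{[h] \in I/\sim} V_{[h]}$ into $k$-submodules, with at least two nonzero summands $V_{[i]}$ and $V_{[i']}$.

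Next I would observe that $V_{[i]}$ is a proper nonzero $k$-submodule. It is nonzero because $v_i \in V_{[i]}$ by construction, and it is a $k$-submodule by Proposition \ref{lema_submodulo}. It is proper because the sum is direct: since $V_{[i']}$ is also nonzero and $V_{[i]} \cap V_{[i']} = \{0\}$, we cannot have $V_{[i]} = \mathbb{V}$. Thus $V_{[i]} \notin \{\{0\}, \mathbb{V}\}$, which contradicts the simplicity of $\mathbb{V}$. Hence every pair of elements of $I$ must be connected.

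I do not expect any genuine obstacle here, as the statement is an immediate consequence of the direct-sum decomposition into $k$-submodules already furnished by Theorem \ref{theo1}. The only point requiring a moment of care is verifying that each summand $V_{[h]}$ is genuinely nonzero (so that having two classes really does yield a proper submodule), but this follows at once from the definition $V_{[h]} = \bigoplus_{h' \in [h]} \mathbb{F} v_{h'}$, which always contains the basis vector $v_h$.
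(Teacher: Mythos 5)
Your proposal is correct and follows essentially the same route as the paper: both arguments rest on the decomposition of Theorem \ref{theo1} into nonzero $k$-submodules $V_{[i]}$ and use simplicity to force a single equivalence class, the only difference being that the paper argues directly ($V_{[i]} = \mathbb{V}$ for some $[i]$, hence $[i]=I$) while you phrase it contrapositively. The extra care you take in checking that each summand is nonzero and proper is sound and merely makes explicit what the paper leaves implicit.
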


\begin{proof}
The simplicity of $\mathbb{V}$ applies to get that $V_{[i]} = \mathbb{V}$ for some $[i] \in I/\sim$. Hence $[i] = I$ and so any couple of elements in $I$ are connected.
\end{proof}

\section{The minimal components}

In this section we show that, under mild conditions, the decomposition of $\mathbb{V}$ presented in Theorem \ref{theo1} can be given by means of the family of its minimal $k$-submodules. We begin by introducing a concept of minimality for $k$-modules that agree with the one for algebras in \cite{Yo_Arb_Alg} and for modules in \cite{Yo_modules}.

\begin{definition}\rm
A $k$-module $\mathbb{V}$ over a linear space $\mathbb{W}$ admitting a multiplicative basis $\mathfrak{B}$ with respect to a fixed basis of $\mathbb{W}$ is said to be {\it minimal} if its unique non-zero $k$-submodule admitting a multiplicative basis inherited from $\mathfrak{B}$ is $\mathbb{V}$.
\end{definition}

Let us also introduce the concept of $\mu$-multiplicativity in the framework of $k$-modules over linear spaces in a similar way to the analogous one for arbitrary algebras and modules over linear spaces (see \cite{Yo_modules,Yo_Arb_Alg} for these notions and examples).

\begin{definition}\rm
We say that a $k$-module $\mathbb{V}$ over a linear space $\mathbb{W}$ admits a $\mu${\it -multiplicative basis} $\mathfrak{B} = \{v_i\}_{i\in I}$ with respect to a fixed basis $\mathfrak{B}'=\{w_j\}_{j \in J}$ of $\mathbb{W}$, if it is multiplicative and given $i,i' \in I$ such that $i' \in \mu(i,i_2,\dots,i_k,j_{k+1},\dots,j_n)$ for some $i_2,\dots,i_k \in I \;\dot\cup\;\overline{I}$ and $j_{k+1},\dots,j_n \in J \;\dot\cup\;\overline{J}$ then $v_{i'} \in [v_i,\mathbb{V},\overset{k-1)}{\dots},\mathbb{V},\mathbb{W},\overset{n-k)}{\dots},\mathbb{W}]_{\sigma}$.
\end{definition}

\begin{theorem}\label{theo2}
Let $\mathbb{V}$ be a $k$-module over a linear space $\mathbb{W}$ admitting a $\mu$-multiplicative basis $\mathfrak{B} = \{v_i\}_{i \in I}$ with respect to the basis $\mathfrak{B}' = \{w_j\}_{j \in J}$ of $\mathbb{W}$. It holds that $\mathbb{V}$ is minimal if and only if the set of indexes $I$ has all of its elements connected.
\end{theorem}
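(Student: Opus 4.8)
The plan is to prove the two implications separately, with the forward direction being essentially a corollary of Theorem \ref{theo1} and the reverse direction carrying the real content through $\mu$-multiplicativity.

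For the implication ``$\mathbb{V}$ minimal $\Rightarrow$ all elements of $I$ connected'', I would argue by contraposition. By Theorem \ref{theo1} we have the decomposition $\mathbb{V} = \bigoplus_{[i] \in I/\sim} V_{[i]}$, where each $V_{[i]}$ is a $k$-submodule admitting the inherited multiplicative basis $\mathfrak{B}_{[i]}$. If some pair of indices were not connected, then $I/\sim$ would contain at least two distinct classes, and a fixed $V_{[i]}$ would be a non-zero $k$-submodule admitting a multiplicative basis inherited from $\mathfrak{B}$ and properly contained in $\mathbb{V}$, since its complement $\bigoplus_{[h] \neq [i]} V_{[h]}$ is non-zero. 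This contradicts minimality. Hence $I/\sim$ is a singleton and every pair of elements of $I$ is connected. I note that this direction uses only multiplicativity, not $\mu$-multiplicativity.

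For the converse, I would assume every pair of elements of $I$ is connected and let $U$ be any non-zero $k$-submodule of $\mathbb{V}$ admitting a multiplicative basis $\mathfrak{B}_U \subset \mathfrak{B}$ inherited from $\mathfrak{B}$; the goal is to show $U = \mathbb{V}$. Since $U \neq 0$, I fix $i \in I$ with $v_i \in \mathfrak{B}_U \subset U$. Given an arbitrary $i' \in I$, connectedness provides a connection $\{X_1, Y_1, \dots, X_t, Y_t\}$ from $i$ to $i'$ in the sense of Definition \ref{connection}. Writing $\mathfrak{A}_1 := \phi(\{i\}, X_1, Y_1)$ and $\mathfrak{A}_{m} := \phi(\mathfrak{A}_{m-1}, X_m, Y_m)$, the core of the proof is the claim that $v_h \in U$ for every $h \in \mathfrak{A}_m$ and every $m$, which I would establish by induction on $m$. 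For $m=1$, each $h \in \mathfrak{A}_1 = \mu(i, X_1, Y_1)$ satisfies $h \in \mu(i, x_{1,2}, \dots, x_{1,k}, y_{1,k+1}, \dots, y_{1,n})$ with arguments in $I \;\dot\cup\; \overline{I}$ and $J \;\dot\cup\; \overline{J}$, so $\mu$-multiplicativity yields $v_h \in [v_i, \mathbb{V}, \overset{k-1)}{\dots}, \mathbb{V}, \mathbb{W}, \overset{n-k)}{\dots}, \mathbb{W}]_{\sigma} \subset U$, the last inclusion holding because $U$ is a $k$-submodule containing $v_i$. For the inductive step, each $h' \in \mathfrak{A}_{m+1} = \bigcup_{h \in \mathfrak{A}_m} \mu(h, X_{m+1}, Y_{m+1})$ lies in $\mu(h, X_{m+1}, Y_{m+1})$ for some $h \in \mathfrak{A}_m$; by hypothesis $v_h \in U$, and the same application of $\mu$-multiplicativity together with the $k$-submodule property of $U$ gives $v_{h'} \in U$. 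Applying the claim at $m = t$ and invoking condition 2 of Definition \ref{connection}, which places $i'$ in $\mathfrak{A}_t$, I obtain $v_{i'} \in U$. As $i'$ was arbitrary, $U$ contains every basis vector of $\mathbb{V}$, so $U = \mathbb{V}$ and $\mathbb{V}$ is minimal.

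The main obstacle is this reverse direction, and specifically the role of $\mu$-multiplicativity. The connection relation is defined purely combinatorially on the index set and invokes the ``inverse'' maps $b_\sigma$ through the barred indices, which a priori carry no meaning inside $\mathbb{V}$ itself. The hypothesis of $\mu$-multiplicativity is exactly what translates a membership $i' \in \mu(i, \dots)$ with barred arguments back into the genuine containment $v_{i'} \in [v_i, \mathbb{V}, \overset{k-1)}{\dots}, \mathbb{V}, \mathbb{W}, \overset{n-k)}{\dots}, \mathbb{W}]_{\sigma}$, so that each combinatorial step of the connection lifts to an actual inclusion of basis vectors in $U$. Without it the induction would break down precisely at the barred steps, which is why mere multiplicativity does not suffice for this implication.
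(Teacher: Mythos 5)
Your proof is correct and follows essentially the same route as the paper's: the forward implication reads off minimality from the decomposition $\mathbb{V} = \bigoplus_{[i]\in I/\sim}V_{[i]}$ of Theorem \ref{theo1}, and the converse fixes $v_{i_0} \in U$ and inducts along a connection, using $\mu$-multiplicativity plus the $k$-submodule property to force $v_h \in [v_g,\mathbb{V},\overset{k-1)}{\dots},\mathbb{V},\mathbb{W},\overset{n-k)}{\dots},\mathbb{W}]_{\sigma} \subset U$ at each step, exactly as in the paper. Your closing remark that $\mu$-multiplicativity is what makes the barred (inverse) steps of a connection liftable to actual containments in $U$ is accurate and is precisely why the paper introduces that hypothesis.
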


\begin{proof}
Suppose $\mathbb{V}$ is minimal. Taking into account the decomposition of $\mathbb{V}$ given in Theorem \ref{theo1}, $\mathbb{V} = \bigoplus_{[i] \in I/\sim}V_{[i]},$ and that any $V_{[i]}$ admits a multiplicative basis, we get $V_{[i]} = \mathbb{V}$ for some $[i] \in I/\sim$ and so $[i] = I$.

To prove the converse, consider a non-zero $k$-submodule $U$ of $\mathbb{V}$ admitting a multiplicative basis inherited from $\mathfrak B$. Then, for a certain $\emptyset \neq I_U \subset I$, we can write $U = \bigoplus_{i \in I_U}\mathbb Fv_i$. Fix some $i_0 \in I_U$ being then
\begin{equation}\label{eqq5}
0 \neq v_{i_0}\in U.
\end{equation}
We show by induction on $t$ that if $\{X_1,Y_1,\dots,X_t,Y_t\}$ is any connection from $i_0$ to some $i' \in I$ then for any $h \in \phi(\phi(\cdots\phi(\{i_0\},X_1,Y_1)\dots),X_t,Y_t)$ we have that $0 \neq v_h \in U$.

In case $t=1$, we get $h \in\phi(\{i_0\},X_1,Y_1)$. Hence $h \in \mu(i_0,i_2,\dots,i_k,j_{k+1},\dots,j_n)$, then, taking into account that $U$ is a $k$-submodule of $\mathbb{V}$, the $\mu$-multiplicativity of $\mathfrak B$ and Equation \eqref{eqq5} we obtain $v_h \in [v_{i_0},\mathbb{V},\overset{k-1)}{\dots},\mathbb{V},\mathbb{W},\overset{n-k)}{\dots},\mathbb{W}]_{\sigma} \subset U$.

Suppose now that the assertion holds for any connection $\{X_1,Y_1,\dots,X_t,Y_t\}$ from $i_0$ to some $r \in I.$ Consider an arbitrary connection $\{X'_1,Y'_1,\dots,X'_t,Y'_t,X'_{t+1},Y'_{t+1}\}$ from $i_0$ to some $i' \in I$. By induction hypothesis we know that for $g \in \mathfrak{A}$, where $\mathfrak{A} := \phi(\phi(\cdots \phi(\{i_0\},X'_1,Y'_1)\cdots), X'_t,Y'_t)$, the element
\begin{equation}\label{eqq6}
0 \neq v_g \in U.
\end{equation}
Taking into account that the fact $h \in \phi(\phi(\cdots \phi(\{i_0\},X'_1,Y'_1)\dots), X'_{t+1},Y'_{t+1})$ means $h \in \phi(\mathfrak{A}, X'_{t+1},Y'_{t+1}),$ we have that $h \in \mu(g,X_{t+1},Y_{t+1}),$ with $g \in \mathfrak{A}$. From here, the $\mu$-multiplicativity of $\mathfrak B$ and Equation \eqref{eqq6} imply that $v_h \in [v_g, \mathbb{V}, \overset{k-1)}{\dots},\mathbb{V},\mathbb{W},\overset{n-k)}{\dots},\mathbb{W}]_{\sigma} \subset U$ as desired.

Given any $i' \in I$ we know that $i_0$ is connected to $i'$, so we can assert by the above observation that $\mathbb Fv_{i'} \subset U$. We have shown $\mathbb{V} = \bigoplus_{i' \in I}\mathbb Fv_{i'} \subset U$ and so $U = \mathbb{V}$.
\end{proof}

\begin{theorem}
Let $\mathbb{V}$ be a $k$-module over a linear space $\mathbb{W}$ admitting a $\mu$-multiplicative basis $\mathfrak{B}$ with respect to a fixed basis of $\mathbb{W}$. Then $\mathbb{V} = \bigoplus_{\alpha \in \Omega} V_{\alpha}$ is the direct sum of the family of its minimal $k$-submodules, each one admitting a $\mu$-multiplicative basis inherited from $\mathfrak B$ and in such a way that $[V_{\alpha},V_{\gamma},\mathbb{V}, \overset{k-2)}{\dots},\mathbb{V},\mathbb{W},\overset{n-k)}{\dots},\mathbb{W}]_{\sigma} = 0,$ with $\sigma \in S_n,$ in case $\alpha \neq \gamma.$
\end{theorem}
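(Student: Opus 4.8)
The plan is to identify the family $\{V_\alpha\}_{\alpha\in\Omega}$ with the decomposition $\mathbb{V}=\bigoplus_{[i]\in I/\sim}V_{[i]}$ already furnished by Theorem~\ref{theo1}, so that $\Omega:=I/\sim$ and $V_\alpha:=V_{[i]}$. The orthogonality relation $[V_{[i]},V_{[h]},\mathbb{V},\overset{k-2)}{\dots},\mathbb{V},\mathbb{W},\overset{n-k)}{\dots},\mathbb{W}]_\sigma=0$ for $[i]\neq[h]$ is part of the statement of Theorem~\ref{theo1} (Proposition~\ref{lema_producto}), so the whole assertion reduces to two points: that each $V_{[i]}$ carries a $\mu$-multiplicative basis inherited from $\mathfrak{B}$, and that each $V_{[i]}$ is minimal.

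First I would verify the $\mu$-multiplicativity of $\mathfrak{B}_{[i]}=\{v_{i'}:i'\in[i]\}$. The essential observation is that for $i'\in[i]$ the operation $\mu$ computed inside $V_{[i]}$ agrees with the restriction of the ambient $\mu$. Indeed, $a_\sigma$ and $b_\sigma$ only involve products one of whose entries is $v_{i'}\in V_{[i]}$, and by Proposition~\ref{lema_producto} such a product vanishes as soon as another $k$-slot entry lies outside $V_{[i]}$; hence the only indices that contribute already belong to $[i]$, while the submodule property (Proposition~\ref{lema_submodulo}) keeps the output in $[i]$. Combining the $\mu$-multiplicativity of $\mathfrak{B}$ in $\mathbb{V}$ with the same orthogonality to replace each $\mathbb{V}$-slot by $V_{[i]}$, one gets $v_{i''}\in[v_{i'},V_{[i]},\overset{k-1)}{\dots},V_{[i]},\mathbb{W},\overset{n-k)}{\dots},\mathbb{W}]_\sigma$ whenever $i''\in\mu(i',x_2,\dots,x_k,y_{k+1},\dots,y_n)$ with the arguments in $[i]\;\dot\cup\;\overline{[i]}$ and $J\;\dot\cup\;\overline{J}$; this is exactly $\mu$-multiplicativity of $\mathfrak{B}_{[i]}$.

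The core of the argument, and the step I expect to be the main obstacle, is the minimality of each $V_{[i]}$, because minimality is phrased through $k$-submodules with inherited bases and one must relate the structure of $V_{[i]}$ as a standalone $k$-module to that of $\mathbb{V}$. I would argue directly: given a nonzero $k$-submodule $U=\bigoplus_{i'\in I_U}\mathbb{F}v_{i'}$ of $V_{[i]}$ with $I_U\subset[i]$, the orthogonality of Proposition~\ref{lema_producto} shows that $U$ is in fact a $k$-submodule of the whole $\mathbb{V}$: expanding $[U,\mathbb{V},\overset{k-1)}{\dots},\mathbb{V},\mathbb{W},\overset{n-k)}{\dots},\mathbb{W}]_\sigma$ by multilinearity, every term with a basis entry outside $V_{[i]}$ dies, leaving $[U,V_{[i]},\overset{k-1)}{\dots},V_{[i]},\mathbb{W},\overset{n-k)}{\dots},\mathbb{W}]_\sigma\subset U$. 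Fixing some $0\neq v_{i_0}\in U$ and running verbatim the inductive argument from the proof of Theorem~\ref{theo2}, the connectivity of all indices of $[i]$ inside $\mathbb{V}$ together with the $\mu$-multiplicativity of $\mathfrak{B}$ yields $\mathbb{F}v_{i'}\subset U$ for every $i'\in[i]$, whence $U=V_{[i]}$.

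Alternatively, and equivalently, one may apply Theorem~\ref{theo2} to the $k$-module $V_{[i]}$ once its $\mu$-multiplicativity is established, reducing minimality to the connectivity of $[i]$ inside $V_{[i]}$; here the same orthogonality guarantees that a connection between elements of $[i]$ never needs a foreign index, since a step $\phi(\mathfrak{A},X_m,Y_m)$ with $\mathfrak{A}\subset[i]$ forces, by the vanishing of products with an entry outside $V_{[i]}$, all entries of $X_m$ into $[i]\;\dot\cup\;\overline{[i]}$ and keeps its output in $[i]$. Thus every connection in $\mathbb{V}$ between indices of $[i]$ is already a connection in $V_{[i]}$. Either way, once it is shown that each $V_{[i]}$ is a minimal $k$-submodule admitting a $\mu$-multiplicative inherited basis, the decomposition of Theorem~\ref{theo1} exhibits $\mathbb{V}$ as the direct sum of its minimal $k$-submodules with the required orthogonality, completing the proof. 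The delicate interplay between the ambient and internal notions of $k$-submodule and of connection is exactly what Proposition~\ref{lema_producto} resolves.
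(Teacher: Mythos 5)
Your proposal is correct and follows essentially the same route as the paper, whose proof simply combines the decomposition of Theorem~\ref{theo1} with an application of Theorem~\ref{theo2} to each $V_{[i]}$. The details you supply --- that $\mathfrak{B}_{[i]}$ is $\mu$-multiplicative as a basis of the standalone module $V_{[i]}$, and that connections (equivalently, submodules with inherited basis) of $V_{[i]}$ agree with the ambient ones via the orthogonality of Proposition~\ref{lema_producto} --- are exactly the verifications the paper asserts without proof, so your write-up is a more careful rendering of the same argument.
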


\begin{proof}
By Theorem \ref{theo1}, $\mathbb{V}$ is the direct sum of the $k$-submodules $V_{[i]}$ (with $[i] \in I/\sim$) satisfying $[V_{[i]},V_{[h]},\mathbb{V},\overset{k-2)}{\dots},\mathbb{V},\mathbb{W},\overset{n-k)}{\dots},\mathbb{W}]_{\sigma} = 0,$ with $\sigma \in S_n,$ if $[i] \neq [h].$
Now for any $V_{[i]}$ we have that $\mathfrak B_{[i]}$ is a $\mu$-multiplicative basis where all of the elements of $[i]$ are connected. Applying Theorem \ref{theo2} to any $V_{[i]}$ we have that the decomposition $\mathbb{V} =\bigoplus_{[i]\in I/\sim} V_{[i]}$ satisfies the assertions of the theorem.
\end{proof}

\section{$k$-modules over arbitrary $n$-ary algebras with multiplicative basis}

In this section we study an application of $k$-modules with a multiplicative basis over an arbitrary $n$-ary algebra with multiplicative basis. We consider a $n$-ary algebra $\mathbb{A}$ of arbitrary dimension and over an arbitrary base field $\mathbb F$ with a multiplicative basis $\mathfrak{B}' = \{e_j\}_{j \in J}.$ That is, for any $j_1, \dots, j_n \in J$ we have $\langle e_{j_1},\dots,e_{j_n} \rangle \in\mathbb{F}e_j$ for some $j \in J$.

\noindent Let $\mathbb{V}$ be a $k$-module over an arbitrary $n$-ary algebra $\mathbb{A}$. Assume that $\mathbb{V}$ has a multiplicative basis $\mathfrak{B} = \{v_i\}_{i \in I}$ with respect to the multiplicative basis $\mathfrak{B}'$. We say $\mathbb{V}$ is a $k$-module over $\mathbb{A}$ under the action $[\mathbb{V},\overset{k)}{\dots},\mathbb{V},\mathbb{A},\overset{n-k)}{\dots},\mathbb{A}]_{\sigma} \subset \mathbb{V}$, for any permutation $\sigma$ of $n$ elements.

Consider the $\mathbb{F}$-linear space $\mathbb{A} \oplus \mathbb{V}$ with basis $\mathfrak{B}'' := \mathfrak{B} \dot{\cup} \mathfrak{B}'$ and define the structure of $k$-module over itself determined by
\begin{eqnarray}
\begin{split}\label{action}
& (\mathbb{A} \oplus \mathbb{V}) \times \overset{n)}{\dots} \times (\mathbb{A} \oplus \mathbb{V}) \longrightarrow (\mathbb{A} \oplus \mathbb{V}) \\
& \llbracket x_1,\dots,x_n \rrbracket := \langle x_1,\dots,x_n \rangle \in \mathbb{A} \\
& \llbracket y_1,\dots,y_k,x_{k+1},\dots,x_n \rrbracket_{\sigma} := [y_1,\dots,y_k,x_{k+1},\dots,x_n]_{\sigma} \in \mathbb{V} \\
& \llbracket y_1,\dots,y_t,x_{t+1},\dots,x_n \rrbracket_{\sigma} := 0, \hspace{0.2cm} 1 \leq t \leq n, t \neq k
\end{split}
\end{eqnarray}
for $x_1,\dots,x_n \in \mathbb{A}, y_1,\dots,y_n \in \mathbb{V}$ and $\sigma$ any permutation of $n$ elements. Clearly, $\mathbb{A} \oplus \mathbb{V}$ is a $k$-module with respect to itself with a multiplicative basis $\mathfrak{B}''$. So Theorem \ref{theo1} let us assert that
\begin{equation}\label{application}
\mathbb{A} \oplus \mathbb{V} = \bigoplus_{\alpha \in \Omega} U_{\alpha},
\end{equation}
being each $U_{\alpha}$ a $k$-submodule of $\mathbb{A} \oplus \mathbb{V}.$ Observe that the decomposition \eqref{application} and the construction of any $U_{\alpha}$ imply $\mathbb{V} = \oplus_{\alpha \in \Omega} (U_{\alpha} \cap \mathbb{V})$ and $\mathbb{A} = \oplus_{\alpha \in \Omega} (U_{\alpha} \cap \mathbb{A}).$ By denoting $V_{\alpha} := U_{\alpha} \cap \mathbb{V}, A_{\alpha} := U_{\alpha} \cap \mathbb{A}$ for any $\alpha \in \Omega,$ and $\Omega_{\mathbb{V}} := \{\alpha \in \Omega : V_{\alpha} \neq 0\},$ $\Omega_{\mathbb{A}} := \{\alpha \in \Omega : A_{\alpha} \neq 0\}$ we can write $$\mathbb{V} = \bigoplus_{\alpha \in \Omega_{\mathbb{V}}} V_{\alpha}, \hspace{0.2cm} \mbox{and} \hspace{0.2cm} \mathbb{A} = \bigoplus_{\alpha \in \Omega_{\mathbb{A}}} A_{\alpha}.$$ As consequence of the action \eqref{action}, any $V_{\alpha}$ is a $k$-module with respect to $\mathbb{A}$ and by \cite[Lemma 2.4]{Yo_n_algebras} any $A_{\beta}$ is an ideal of the $n$-ary algebra $\mathbb{A}$.

\begin{lemma}\label{lema_ultimo}
For any $\alpha \in \Omega_V$ such that $\llbracket V_{\alpha}, \mathbb{V}, \overset{k-1)}{\dots},\mathbb{V}, \mathbb{A}, \overset{n-k)}{\dots}, \mathbb{A} \rrbracket_{\sigma} \neq 0$, there exists a unique $\beta \in \Omega_{\mathbb{A}}$ such that $\llbracket V_{\alpha}, \mathbb{V}, \overset{k-1)}{\dots},\mathbb{V}, A_{\beta}, \mathbb{A}, \overset{n-k-1)}{\dots}, \mathbb{A} \rrbracket_{\sigma} \neq 0$.
\end{lemma}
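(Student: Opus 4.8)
The plan is to push everything down to the multiplicative basis $\mathfrak{B}''$ and to the connection relation supplied by Theorem \ref{theo1} on the index set of $\mathbb{A}\oplus\mathbb{V}$, which is the disjoint union of $I$ and $J$; each class $\alpha$ gives $U_\alpha$, and $V_\alpha=U_\alpha\cap\mathbb{V}$, $A_\alpha=U_\alpha\cap\mathbb{A}$. The decisive point to keep in mind is that, since $\mathbb{A}\oplus\mathbb{V}$ is a $k$-module over \emph{itself}, both the module-slots and the linear-slots feeding the maps $a_\sigma$ range over this whole index set, so an index coming from $J$ may legitimately be placed in the first module-slot.

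First I would settle existence. As $\llbracket V_\alpha,\mathbb{V},\overset{k-1)}{\dots},\mathbb{V},\mathbb{A},\overset{n-k)}{\dots},\mathbb{A}\rrbracket_\sigma\neq 0$ and the product is $n$-linear, expanding along $\mathfrak{B}''$ yields indices $i_1\in\alpha$, $i_2,\dots,i_k\in I$ and $j_{k+1},\dots,j_n\in J$ with $0\neq\llbracket v_{i_1},\dots,v_{i_k},e_{j_{k+1}},\dots,e_{j_n}\rrbracket_\sigma=c\,v_r$ for some $c\in\mathbb{F}\setminus\{0\}$ and $r\in I$. Setting $\beta:=[j_{k+1}]$ gives at once $e_{j_{k+1}}\in A_\beta\neq 0$, hence $\beta\in\Omega_{\mathbb{A}}$, and $\llbracket V_\alpha,\mathbb{V},\overset{k-1)}{\dots},\mathbb{V},A_\beta,\mathbb{A},\overset{n-k-1)}{\dots},\mathbb{A}\rrbracket_\sigma\neq 0$, since the displayed basis product is one of its spanning terms.

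The core of the argument is uniqueness, which I would obtain by proving that \emph{any} admissible $\beta$ must coincide with $\alpha$; concretely, that in every nonzero basis product $\llbracket v_{i_1},\dots,v_{i_k},e_{j_{k+1}},\dots,e_{j_n}\rrbracket_\sigma=c\,v_r$ the indices $i_1,\dots,i_k,j_{k+1},\dots,j_n,r$ all lie in a single connection class. The relation $i_1\sim r$ is witnessed by the connection $\{(i_2,\dots,i_k),(j_{k+1},\dots,j_n)\}$ exactly as in the proof of Proposition \ref{lema_submodulo}, and $i_\ell\sim r$ for $2\le\ell\le k$ follows from the invariance of $\mu$ under permutation of the module-arguments recorded in Remark \ref{remark1}.

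The hard part will be to connect the linear-slot index $j_{k+1}$ to $r$, since the connection relation only registers the index occupying the first module-slot. Here I would exploit the self-action \eqref{action}: the value of $\llbracket\cdot\rrbracket_\sigma$ depends only on which basis element sits in which actual position and on the number of factors lying in $\mathbb{V}$, not on whether a given position was fed through a module-slot or a linear-slot. Thus, moving $e_{j_{k+1}}$ into the first module-slot and $v_{i_1}$ into the vacated slot, and replacing $\sigma$ by its composite with the transposition of the first and $(k+1)$-th slots so that every element keeps its actual position, leaves the product equal to $c\,v_r$ and keeps exactly $k$ factors in $\mathbb{V}$, so \eqref{action} still returns the module action. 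Hence $r\in\mu(j_{k+1},i_2,\dots,i_k,i_1,j_{k+2},\dots,j_n)$ with $j_{k+1}$ now in the module-slot, and the connection $\{(i_2,\dots,i_k),(i_1,j_{k+2},\dots,j_n)\}$ gives $j_{k+1}\sim r$. By transitivity $\alpha=[i_1]=[j_{k+1}]=\beta$, so the class produced in the existence step is the only one that works, which is the asserted uniqueness.
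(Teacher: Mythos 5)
Your proof is correct, and it runs on the same machinery as the paper's: both arguments take place inside the self-module $\mathbb{A}\oplus\mathbb{V}$ with basis $\mathfrak{B}''$, and both hinge on the observation you isolate explicitly, namely that since every slot of $\llbracket \cdot,\dots,\cdot \rrbracket_{\sigma}$ accepts any element of $\mathbb{A}\oplus\mathbb{V}$ and the value depends only on the actual positions and the number of factors lying in $\mathbb{V}$, an index from $J$ may legitimately occupy the first argument of $\mu$; this is exactly the move the paper makes when it writes $h\in\phi(\{i\},(p,P))$ with $i\in J$. Where you genuinely diverge is the organization of uniqueness. The paper argues by contradiction: assuming two classes $\beta\neq\gamma$ both act, it picks $v_p,v_s\in\mathfrak{B}\cap V_{\alpha}$, $e_i\in\mathfrak{B}'\cap A_{\beta}$, $e_j\in\mathfrak{B}'\cap A_{\gamma}$ and builds an explicit four-step connection from $i$ to $j$ when $p=s$, splicing in a connection from $p$ to $s$ (supplied by $[p]=[s]=\alpha$ via Theorem \ref{theo1}) when $p\neq s$. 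You instead prove the stronger direct claim that all indices occurring in a single nonzero basis product lie in one class, so that any acting $\beta$ equals $\alpha$ itself; uniqueness is then immediate, with no case split and no concatenated connection, since two acting classes each equal $\alpha$. Your route buys a shorter argument (each product is handled in isolation, transitivity through $\alpha$ replacing the spliced connection from $p$ to $s$) and a sharper conclusion: it shows that the bijection $f$ in the closing Corollary is just $f(\alpha)=\alpha$ once $\Omega'_{\mathbb{V}}$ and $\Omega'_{\mathbb{A}}$ are viewed as subsets of $\Omega$, whereas the paper's version leaves $f$ abstract. Two small remarks: your step $i_{\ell}\sim r$ for $2\le \ell\le k$ via Remark \ref{remark1} is correct but not actually needed for the lemma, and your careful bookkeeping with the transposition composed into $\sigma$ is exactly what justifies the paper's own (unexplained) use of $a_{\sigma}$ with a $J$-index in the first slot, so nothing is missing.
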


\begin{proof}
In the opposite case, there exist $v_p, v_s \in \mathfrak{B} \cap V_{\alpha}$, $e_i \in \mathfrak{B}' \cap A_{\beta}$ and $e_j \in \mathfrak{B}' \cap A_{\gamma}$ with $\beta, \gamma \in \Omega_{\mathbb{A}}, \beta \neq \gamma$, such that $$0 \neq \llbracket v_p,v_{i_2},\dots,v_{i_k},e_i,e_{j_{k+2}},\dots,e_{j_n} \rrbracket_{\sigma} \in \mathbb{F}v_h$$ and $$0 \neq \llbracket v_s,v_{l_2},\dots,v_{l_k},e_j,e_{q_{k+2}},\dots,e_{q_n} \rrbracket_{\sigma} \in \mathbb{F}v_{h'},$$ with $v_{i_2},\dots,v_{i_k},v_{l_2},\dots,v_{l_k} \in \mathfrak{B},$ $e_{j_{k+2}},\dots,e_{j_n},e_{q_{k+2}},\dots, e_{q_n} \in \mathfrak{B}'$ and certains $v_h,v_{h'} \in \mathbb{F}$. Since $[p] = [s] = \alpha$ (see Theorem \ref{theo1}) there exists a connection $\{X_1,Y_1,\dots,X_t,Y_t\}$ from $p$ to $s$ in case $p \neq s$. Let us define $P := (i_2,\dots,i_k,j_{k+2},\dots,j_n)$ and $Q := (l_2,\dots,l_k,,q_{k+2},\dots,q_n).$ Let us fix the notation for $\mathcal{A} := (a_1,\dots,a_{n-2}),$  $(a, \mathcal{A}) := (a,a_1,\dots,a_{n-2})$ and $\phi(\{b\},(a, \mathcal{A})) := \phi(\{b\},a,a_1,\dots,a_{n-2}),$ with $a, a_1,\dots,a_r \in I \cup J \cup \overline{I} \cup \overline{J}$ and $b \in I$. Then we have that either the set $\{(p, P), (\overline{i}, \overline{P}), (j, Q), (\overline{s}, \overline{Q})\}$ (if $p = s$) is a connection from $i$ to $j$ because
\begin{eqnarray*}
\begin{split}
& h \in \phi(\{i\},(p, P)) \neq \emptyset\\
& s = p \in \phi(\phi(\{i\},(p, P)),(\overline{i}, \overline{P})) \neq \emptyset \\
& h' \in \phi(\phi(\phi(\{i\}, (p, P)), (\overline{i}, \overline{P})),(j, Q)) \neq \emptyset \\
& j \in \phi(\phi(\phi(\phi(\{i\},(p, P)), (\overline{i},\overline{P})), (j, Q)), (\overline{s}, \overline{Q})) \\
\end{split}
\end{eqnarray*}
or in a similar way we can use the set $\{(p, P), (\overline{i}, \overline{P}), X_1,Y_1,\dots,X_t,Y_t,(j, Q), (\overline{s}, \overline{Q})\}$ (if $p \neq s$) to conclude again that $[i] = [j]$. That is (we apply Theorem \ref{theo1} on $A$ because it can be clearly seen as an $n$-module over itself), $\beta = \gamma$, a contradiction. In a similar way, we can show that for any $\beta \in \Omega_{\mathbb{A}}$ such that $\llbracket \mathbb{V}, \overset{k)}{\dots},\mathbb{V}, A_{\beta}, \mathbb{A}, \overset{n-k-1)}{\dots},\mathbb{A} \rrbracket_{\sigma} \neq 0$, there exists a unique $\alpha \in \Omega_{\mathbb{V}}$ such that $\llbracket V_{\alpha}, \mathbb{V}, \overset{k-1)}{\dots},\mathbb{V}, A_{\beta}, \mathbb{A}, \overset{n-k-1)}{\dots},\mathbb{A} \rrbracket_{\sigma} \neq 0$.
\end{proof}

By applying Theorems \ref{theo1}, \ref{theo2} and Lemma \ref{lema_ultimo} we can assert the next result.

\begin{corollary}
Let $\mathbb{A}$ be an arbitrary $n$-ary algebra with a multiplicative basis $\mathfrak{B}'$ and $\mathbb{V}$ a $k$-module over the $n$-ary algebra $\mathbb{A}$ admitting a multiplicative basis $\mathfrak{B}$ with respect to $\mathfrak{B}'$. Then $$\mathbb{V} = \bigoplus_{\alpha \in \Omega_V}V_{\alpha},$$ being any $V_{\alpha}$ a $k$-submodule of $\mathbb{V}$ admitting a multiplicative basis inherited from the one of $\mathbb{V}$, and $$\mathbb{A} = \bigoplus_{\beta \in \Omega_A}A_{\beta},$$ being any $A_{\beta}$ an ideal of $\mathbb{A}$ admitting a multiplicative basis inherited from the one of $\mathbb{A}$. Furthermore, if we denote by $\Omega'_{\mathbb{V}} := \{\alpha \in \Omega_{\mathbb{V}} : \llbracket V_{\alpha},\mathbb{V},\overset{k-1)}{\dots},\mathbb{V},\mathbb{A},\overset{n-k)}{\dots},\mathbb{A} \rrbracket_{\sigma} \neq 0\}$ and by $\Omega'_{\mathbb{A}} := \{\beta \in \Omega_{\mathbb{A}} : \llbracket \mathbb{V},\overset{k)}{\dots},\mathbb{V},A_{\beta},\mathbb{A},\overset{n-k-1)}{\dots},\mathbb{A} \rrbracket_{\sigma} \neq 0\},$ there exists a bijection $f : \Omega'_{\mathbb{V}} \to \Omega'_{\mathbb{A}}$ satisfying that $\llbracket V_{\alpha},\mathbb{V},\overset{k-1)}{\dots},\mathbb{V},A_{f(\alpha)},\mathbb{A},\overset{n-k-1)}{\dots},\mathbb{A} \rrbracket_{\sigma} \neq 0$ and $\llbracket V_{\alpha},\mathbb{V},\overset{k-1)}{\dots},\mathbb{V},A_{\beta},\mathbb{A},\overset{n-k-1)}{\dots},\mathbb{A} \rrbracket_{\sigma} = 0$ if $\beta \neq f(\alpha)$.
If $\mathfrak{B}''$ is also $\mu$-multiplicative, then the above decompositions of $\mathbb{V}$ and $\mathbb{A}$ are by means of the family of its minimal $k$-submodules and of the family of its minimal ideals, respectively.
\end{corollary}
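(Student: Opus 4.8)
The plan is to deduce everything from the auxiliary $k$-module $\mathbb{A}\oplus\mathbb{V}$ over itself, whose multiplicative basis $\mathfrak{B}''=\mathfrak{B}\dot{\cup}\mathfrak{B}'$ was produced by the action \eqref{action}. First I would apply Theorem \ref{theo1} to get $\mathbb{A}\oplus\mathbb{V}=\bigoplus_{\alpha\in\Omega}U_\alpha$. Since each $U_\alpha$ is the span of the basis vectors of $\mathfrak{B}''$ it contains, and $\mathfrak{B}''$ is the disjoint union of a basis of $\mathbb{V}$ and a basis of $\mathbb{A}$, intersecting with $\mathbb{V}$ and $\mathbb{A}$ gives $\mathbb{V}=\bigoplus_{\alpha\in\Omega_{\mathbb{V}}}V_\alpha$ and $\mathbb{A}=\bigoplus_{\beta\in\Omega_{\mathbb{A}}}A_\beta$ with $V_\alpha=U_\alpha\cap\mathbb{V}$ and $A_\beta=U_\beta\cap\mathbb{A}$, exactly as recorded just before \eqref{application}. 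That each $V_\alpha$ is a $k$-submodule of $\mathbb{V}$ over $\mathbb{A}$ with inherited (hence multiplicative) basis, and each $A_\beta$ an ideal of $\mathbb{A}$ with inherited basis, follows from the block shape of \eqref{action} together with \cite[Lemma 2.4]{Yo_n_algebras}, as already noted in the running text; this settles the two direct-sum decompositions.

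Next I would build the bijection $f$ purely from Lemma \ref{lema_ultimo}. For $\alpha\in\Omega'_{\mathbb{V}}$ the forward part of that lemma yields a unique $\beta\in\Omega_{\mathbb{A}}$ with $\llbracket V_\alpha,\mathbb{V},\overset{k-1)}{\dots},\mathbb{V},A_\beta,\mathbb{A},\overset{n-k-1)}{\dots},\mathbb{A}\rrbracket_\sigma\neq 0$, and I set $f(\alpha):=\beta$. Because $A_\beta\subseteq\mathbb{A}$, this nonvanishing forces $f(\alpha)\in\Omega'_{\mathbb{A}}$, while the uniqueness clause is precisely the assertion $\llbracket V_\alpha,\mathbb{V},\overset{k-1)}{\dots},\mathbb{V},A_\beta,\mathbb{A},\overset{n-k-1)}{\dots},\mathbb{A}\rrbracket_\sigma=0$ for $\beta\neq f(\alpha)$. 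Injectivity comes from the reverse part of Lemma \ref{lema_ultimo} (for a fixed $\beta\in\Omega'_{\mathbb{A}}$ only one $\alpha$ can act nontrivially on $A_\beta$), and surjectivity from that same reverse part combined with forward uniqueness: given $\beta\in\Omega'_{\mathbb{A}}$ the lemma supplies $\alpha\in\Omega_{\mathbb{V}}$ acting nontrivially on $A_\beta$, which then lies in $\Omega'_{\mathbb{V}}$ (replace $A_\beta$ by the larger $\mathbb{A}$) and satisfies $f(\alpha)=\beta$. Hence $f$ is a bijection enjoying the two stated properties.

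Finally, for the minimality assertion I would add the hypothesis that $\mathfrak{B}''$ is $\mu$-multiplicative and apply the final theorem of Section~3 to $\mathbb{A}\oplus\mathbb{V}$, so that each $U_\alpha$ is a minimal $k$-submodule of $\mathbb{A}\oplus\mathbb{V}$. To transfer minimality to $V_\alpha$ I would take any nonzero $k$-submodule $W\subsetneq V_\alpha$ of $\mathbb{V}$ over $\mathbb{A}$ carrying an inherited basis and observe, via the block structure of \eqref{action} (a product vanishes unless exactly $k$ arguments lie in $\mathbb{V}$, in which case it reproduces the $\mathbb{V}$-over-$\mathbb{A}$ action), that $W$ is automatically a $k$-submodule of $\mathbb{A}\oplus\mathbb{V}$ with inherited basis; being nonzero and strictly contained in $U_\alpha$, it contradicts the minimality of $U_\alpha$, whence $V_\alpha$ is minimal. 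The identical argument, now reading $\llbracket\cdot\rrbracket$ on $\mathbb{A}$ as the $n$-ary product and invoking \cite[Lemma 2.4]{Yo_n_algebras}, shows each $A_\beta$ is a minimal ideal. The step I expect to be the main obstacle is exactly this transfer: one must verify carefully that the two a priori different notions of $k$-submodule — submodule of $\mathbb{V}$ over $\mathbb{A}$ versus submodule of $\mathbb{A}\oplus\mathbb{V}$ over itself — agree on subspaces of $\mathbb{V}$, which is where the vanishing clauses of \eqref{action} do the work, and to confirm that the inclusion $W\subsetneq U_\alpha$ is strict (immediate, since $W\subseteq\mathbb{V}$ whereas $U_\alpha$ may carry a nonzero $\mathbb{A}$-part).
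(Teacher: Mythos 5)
Your overall architecture is exactly the paper's: the authors give no separate proof, asserting the corollary ``by applying Theorems \ref{theo1}, \ref{theo2} and Lemma \ref{lema_ultimo}'', which is precisely your route. Your treatment of the two direct-sum decompositions (Theorem \ref{theo1} applied to $\mathbb{A}\oplus\mathbb{V}$, then intersecting with the summands) and of the bijection $f$ (forward uniqueness of Lemma \ref{lema_ultimo} to define $f$ and to get the vanishing clause, reverse uniqueness for injectivity and surjectivity) is correct and is the intended reading. Your transfer of minimality to the $V_\alpha$ is also sound: for $W\subseteq\mathbb{V}$, every bracket with an argument in $W$ is either zero or reproduces the $\mathbb{V}$-over-$\mathbb{A}$ action, so a $k$-submodule of $\mathbb{V}$ over $\mathbb{A}$ with inherited basis is indeed a $k$-submodule of $\mathbb{A}\oplus\mathbb{V}$, contradicting minimality of $U_\alpha$.

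The gap is the last sentence, where you claim ``the identical argument'' handles the $A_\beta$. It is not identical, and as stated it fails: for a nonzero ideal $W\subseteq A_\beta$ with inherited basis and $k<n$, the brackets $\llbracket y_1,\dots,y_k,w,x_{k+2},\dots,x_n\rrbracket_{\sigma}$ with $w\in W$, $y_l\in\mathbb{V}$, $x_l\in\mathbb{A}$ contain exactly $k$ arguments in $\mathbb{V}$, so they are \emph{not} killed by the vanishing clauses of \eqref{action}; they land in $\mathbb{V}$, not in $W$. Hence an ideal of $\mathbb{A}$ with inherited basis need not be a $k$-submodule of $\mathbb{A}\oplus\mathbb{V}$, and the minimality of $U_\beta$ cannot be quoted against it directly --- your own ``block structure'' verification, which you correctly flagged as the main obstacle, only works on the $\mathbb{V}$ side. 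The step can be repaired, but it needs an additional observation: $\mu$-multiplicativity of $\mathfrak{B}''$ forces every mixed bracket of basis elements to vanish. Indeed, if $0\neq\llbracket v_{i_1},\dots,v_{i_k},e_{j_{k+1}},\dots,e_{j_n}\rrbracket_{\tau}\in\mathbb{F}v_r$, then relabelling so that $j_{k+1}$ is the first labelled index gives $r\in\mu(j_{k+1},i_1,\dots,i_k,j_{k+2},\dots,j_n)$, whence $j_{k+1}\in\mu(r,\overline{i}_1,\dots,\overline{i}_k,\overline{j}_{k+2},\dots,\overline{j}_n)$ by Lemma \ref{lema1}, and $\mu$-multiplicativity of $\mathfrak{B}''$ would force $e_{j_{k+1}}\in\llbracket v_r,\mathbb{A}\oplus\mathbb{V},\dots,\mathbb{A}\oplus\mathbb{V}\rrbracket_{\sigma}\subseteq\mathbb{V}$ (any bracket with an argument in $\mathbb{V}$ is zero or lies in $\mathbb{V}$), which is absurd since $e_{j_{k+1}}\neq 0$. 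With this vanishing in hand the mixed action is zero, the equivalence classes of $\mathfrak{B}''$ never mix indices of $I$ and $J$ (so $U_\beta=A_\beta$), every ideal of $\mathbb{A}$ with inherited basis \emph{is} a $k$-submodule of $\mathbb{A}\oplus\mathbb{V}$, and your contradiction argument then goes through for the $A_\beta$ as well.
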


\medskip


\begin{thebibliography}{99}

\bibitem{1} Abdesselam, B.; The twisted Heisenberg algebra $e_{h,w}(H(4))$. {\it J. Math. Phys.} {\bf 38} (1997), no. 12, 6045-6060.

\bibitem{3} Albeverio, S.; Ayupov, Sh.A.; Omirov, B.A.; Khudoyberdiyev, A.Kh.; $n$-Dimensional filiform Leibniz algebras of length $(n-1)$ and their derivations. {\it J. Algebra} {\bf 319} (2008), no. 6, 2471-2488.

\bibitem{4} Balogh, Z.; Further results on a filtered multiplicative basis of group algebras. {\it Math. Commun.} {\bf 12} (2007), no. 2, 229-238.

\bibitem{m5} Bautista, R.; Gabriel, P.; Roiter, A.V.; Salmeron, L.; Representation-finite algebras and multiplicative basis. {\it Invent. Math.} {\bf 81} (1985), 217-285.


\bibitem{6} Bovdi, V.; On a filtered multiplicative basis of group algebras. {\it Arch. Math. (Basel)} {\bf 74} (2000), no. 2, 81-88.

\bibitem{m2} Bovdi, V.; On a filtered multiplicative bases of group algebras. II. {\it Algebr. Represent. Theory} {\bf 6} (2003), no. 3, 353-368.

\bibitem{m4} Bovdi, V.; Grishkov, A.; Siciliano, S.; Filtered multiplicative bases of restricted enveloping algebras. {\it Algebr. Represent. Theory} {\bf 14} (2011), no. 4, 601-608.

\bibitem{Ref} Bovdi, V.; Grishkov, A.; Siciliano, S.; On filtered multiplicative bases of some associative algebras. {\it Algebr. Represent. Theory.} {\bf 18} (2015), no. 2, pp 297–306

\bibitem{10} Calder\'on, A.J.; On split Lie algebras with a symmetric root system. {\it Proc. Indian Acad. Sci. Math. Sci.} {\bf 118} (2008), no. 3, 351-356.

\bibitem{Yo_n_algebras} Calder\'on, A.J.; Navarro, F.J.; S\'anchez, J.M.; $n$-Algebras admitting a multiplicative basis. {\it J. Algebra Appl.} {\bf 16} (2018), 11, 1850025 (11 pages).

\bibitem{Yo_modules} Calder\'on, A.J.; Navarro, F.J.; S\'anchez, J.M.; Modules over linear spaces admitting a multiplicative basis. {\it Linear Multilinear Algebra} {\bf 65} (2017), no. 1, 156-165.


\bibitem{Yo_Arb_Alg} Calder\'on, A.J.; Navarro, F.J.; Arbitrary algebras with a multiplicative basis. {\it Linear Algebra Appl.} {\bf 498} (2016), no. 1, 106-116.

\bibitem{phy1} Chu, Y.J.; Huang, F.; Zheng, Z.J.; A commutant of $\beta\gamma$-system associated to the highest weight module $V_4$ of $sl(2,\Bbb C)$. {\it J. Math. Phys.} {\bf 51} (2010), no. 9, 092301, 32 pp.

\bibitem{17} De la Mora, C.; Wojciechowski, P.J.; Multiplicative bases in matrix algebras. {\it Linear Algebra Appl.} {\bf 419} (2006), no. 2-3, 287-298.

\bibitem{phy2} Dimitrov, I.; Futorny, V.; Penkov, I.; A reduction theorem for highest weight modules over toroidal Lie algebras. {\it Comm. Math. Phys.} {\bf 250} (2004), no. 1, 47-63.

\bibitem{Dzhumadil'daev} Dzhumadil'daev, A.S.; Representations of Vector Product $n$-Lie Algebras. {\it Comm. Algebra} {\bf 32} (2004), no. 9, 3315-3326.

\bibitem{Filippov} Filippov, V.T.; $n$-Lie algebras. {\it Sib. Mat. Zh.} {\bf 26}, no. 6, (1985), 126-140.

\bibitem{phy3} Grantcharov, D.; Jung, J.H.; Kang, S.J.; Kim, M.; Highest weight modules over quantum queer superalgebra $U_q({ {\frak q}(n)})$. {\it Comm. Math. Phys.} {\bf 296} (2010), no. 3, 827-860.

\bibitem{phy4} Iohara, K.; Unitarizable highest weight modules of the $N=2$ super Virasoro algebras: untwisted sectors. {\it Lett. Math. Phys.} {\bf 91} (2010), no. 3, 289-305.

\bibitem{KP2016} Kaygorodov, I.; Popov, Y.; Generalized derivations of (color) $n$-ary algebras. {\it Linear Multilinear Algebra} {\bf 64} (2016), no. 6, 1086-1106.

\bibitem{2} Kaygorodov, I.; Popov, Y.; Pozhidaev, A.; Volkov, Y.; Degenerations of Zinbiel and nilpotent Leibniz algebras. Published online in {\it Linear and Multilinear Algebra}. DOI: 10.1080/03081087.2017.1319457

\bibitem{m3} Kupisch, H.; Waschbusch, J.; On multiplicative basis in quasi-Frobenius algebras. {\it Math. Z.} {\bf 186} (1984), 401-405.

\bibitem{phy5} Liu, D.;  Gao, S.; Zhu, L.; Classification of irreducible weight modules over $W$-algebra $W(2,2)$. {\it J. Math. Phys.} {\bf 49} (2008), no. 1, 113503, 6 pp.

\bibitem{19} Makhlouf, A.; Hom-alternative algebras and hom-Jordan algebras. {\it Int. Electron. J. Algebra} {\bf 8} (2010), 177-190.

\bibitem{Vinogradov} Michor, P.W.; Vinogradov, A.M.; $n$-ary Lie and associative algebras. {\it Geometrical structures for physical theories, II} (Vietri, 1996). {\it Rend. Sem. Mat. Univ. Politec. Torino} {\bf 54} (1996), no. 4, 373-392.

\bibitem{22} Ren, B.; Ji Meng, D.; Some two steps nilpotent Lie algebras I. {\it Linear Algebra Appl.} {\bf 338} (2001), 77-98.

\bibitem{23} Roiter, A.V.; Sergeichuk, V.V.; Existence of a multiplicative basis for finitely spaced module over an aggregate. {\it Ukrainian Math. J.} {\bf 46} (1995), no. 5, 604-617.

\bibitem{24} Sagle, A.A.; On simple extended Lie algebras over fields of characteristic zero. {\it Pacific J. Math.} {\bf 15} (1965), no. 2, 621-648.

\bibitem{25} Schue, J.R.; Hilbert space methods in the theory of Lie algebras. {\it Trans. Amer. Math. Soc.} {\bf 95} (1960), 69-80.

\bibitem{26} Stumme, N.; The structure of locally finite split Lie algebras. {\it J. Algebra} {\bf 220} (1999), 664-693.

\bibitem{chino1} Takemura, K.; The decomposition of level-$1$ irreducible highest-weight modules with respect to the level-$0$ actions of the quantum affine algebra. {\it J. Phys. A} {\bf 31} (1998), no. 5, 1467-1485.

\bibitem{chino2} Zapletal, A.; Difference equations and highest-weight modules of $U_q[{\rm sl}(n)]$. {\it J. Phys. A} {\bf 31} (1998), no. 47, 9593-9600.

\end{thebibliography}
\end{document}